\newtheorem{thm}{Theorem}[section]
\newtheorem{cor}[thm]{Corollary}
\newtheorem{lem}[thm]{Lemma}
\newtheorem{prop}[thm]{Proposition}
\newtheorem*{prop*}{Proposition}
\theoremstyle{definition}
\newtheorem{defi}[thm]{Definition}
\theoremstyle{remark}
\newtheorem{rem}[thm]{Remark}
\numberwithin{equation}{section}
\def\dist{\mathop{\text{\normalfont dist}}}
\begin{document}

\title[Hopf's lemmas and boundary behaviour of solutions]{Hopf's lemmas and boundary behaviour of solutions to the fractional Laplacian in Orlicz-Sobolev spaces}

\author{Pablo Ochoa}

\address{P. Ochoa. \newline Universidad Nacional de Cuyo, Fac. de Ingenier\'ia. CONICET. Universidad J. A. Maza\\Parque Gral. San Mart\'in 5500\\
Mendoza, Argentina.}
\email{pablo.ochoa@ingenieria.uncuyo.edu.ar}

\author{Ariel Salort}
\address{A. Salort \newline Universidad CEU San Pablo, Urbanización Montepríncipe,
s.n. 28668, Madrid, Spain}
\email{\tt amsalort@gmail.com, ariel.salort@ceu.es}

\parskip 3pt
\subjclass[2020]{35P20, 46E30, 35R11,47J10}
\keywords{Hopf's lemma, nonstandard growth operators,  fractional PDEs, boundary lemma}

\maketitle

\begin{abstract}
In this article we study different extensions of the celebrated Hopf's boundary lemma within the context of a  family of nonlocal, nonlinear and nonstandard growth operators. More precisely, we examine the behavior of solutions of the fractional $a-$Laplacian operator near the boundary of a domain satisfying  the interior ball condition. Our approach addresses problems  involving both constant-sign and sign-changing potentials.
\end{abstract}
 
\section{Introduction}

The Hopf's lemma for nonlocal operators has received significant attention in recent years, as it provides a powerful framework for analyzing nonlocal problems. This lemma enables applications across a wide range of scientific fields, including game theory, finance, image processing and Lévy processes among others. See for example \cite{B96, C12, CT, GO} and the references therein.

Classically, the Hopf's lemma provides a refined analysis of the outer normal derivative of superharmonic functions at a minimum boundary point of a bounded domain $\Omega$ that satisfies the interior ball
condition: if $u\in C^2(\overline\Omega)$ is such that $u(x_0)<u(x)$ for all $x\in \Omega$, then $-\Delta u \geq c(x) u $ in $\Omega$ implies that $\frac{\partial u}{\partial \eta }(x_0)<0$. Here $c\in L^\infty(\Omega)$ is a function such that $c(x)\leq 0$ and $\partial u/\partial \eta$ is the outer normal derivative of $u$ at $x_0$. See for instance \cite[Lemma 3.4]{GT}. 

Under the same assumptions on $c$ and $\Omega$, a nonlocal extension of this result was introduced in \cite{ FJ,GS} for the fractional Laplacian. In this generalization, it is shown that if $u$ is a (weak) solution to
\begin{align*}
\begin{cases}
(-\Delta)^s u\geq c u &\text{ in }\Omega,\\  u \geq 0 &\text{ in } \mathbb{R}^n\setminus \Omega
\end{cases}
\quad \text { then } \quad \lim_{B_R\ni x\to x_0 }\frac{u(x)}{\delta_R^s (x)} >0,
\end{align*}
where $\delta_R$ is the distance function from $x$ to $\partial B_R$, being $B_R$ an interior ball at $x_0\in \partial\Omega$.

\noindent Subsequently, in  \cite{DPQ} and later with an alternative proof in \cite{OS}, the previous result was extended to the fractional $p-$Laplacian. This operator, up to a  normalization constant, is defined as
$$
(-\Delta_p)^s:=\text{p.v.} \int_{\mathbb{R}^n} \frac{|u(x)-u(y)|^{p-2}(u(x)-u(y))}{|x-y|^{n+sp}}\,dy
$$
for $p>1$ and $s\in (0,1)$. Under the same assumptions on $\Omega$ and $c$ as  before, here the Hopf's lemma asserts that if $u$ is a (weak) solution to
\begin{align*}
\begin{cases}
(-\Delta_p)^s u\geq c |u|^{p-2}u &\text{ in }\Omega,\\  u \geq 0 &\text{ in } \mathbb{R}^n\setminus \Omega
\end{cases}
\quad \text { then  } \quad \lim_{B_R\ni x\to x_0 }\frac{u(x)}{\delta_R^s (x)} >0.
\end{align*}

\noindent Recently, this result was  extended to cover  the nonlocal non-standard growth operator known as fractional $a-$Laplacian, which is defined in \cite{FBS} as
$$
(-\Delta_a)^s u(x):= \text{p.v.} \int_{\mathbb{R}^n} a\left( |D^s u(x,y)|\right) \frac{D^s u(x,y)}{|D^s u(x,y)|}\frac{dy}{|x-y|^{n+s}}
$$
where $s\in (0,1)$ and $D^s u(x,y):=\frac{u(x)-u(y)}{|x-y|^s}$ is the nonlocal gradient. Here $A$ is a so-called Young function, that is, mainly a convex function $A:[0,\infty)\to [0,\infty)$ such  that $A'(t)=a(t)$ for $t>0$. More precisely, under the previous hypotheses on $\Omega$ and $c$,  and assuming the following growth condition  on $A$:
\begin{align} \label{cond.sen}
1<p\leq \frac{ta'(t)}{a(t)}\leq q<\infty \qquad \text{ for some } p,q>1, 
\end{align}
in \cite{Sen} it is claimed that if $u$ is a weak solution to 
\begin{align} \label{hopf.sen}
\begin{cases}
(-\Delta_a)^s u\geq c a(u) &\text{ in }\Omega,\\  u \geq 0 &\text{ in } \mathbb{R}^n\setminus \Omega
\end{cases}
\quad \text { then } \quad \lim_{B_R\ni x\to x_0 }\frac{u(x)}{\delta_R^s (x)} >0,
\end{align}

As recently discussed in detail in \cite[\text{p.\,2}]{Ataei}, we also remark that we could not verify the arguments presented in \cite[Lemma  4.1]{DPQ}, which are employed to establish Hopf’s principle for the fractional p-Laplacian. Similar reasoning was subsequently employed in \cite{Sen} to derive the Hopf’s principle for the fractional  $a-$Laplacian operator stated in \eqref{hopf.sen},  however, we are unable to fully confirm the validity of this proof as well.

In the article \cite{OS}, we introduced a different approach to that in \cite{DPQ} to establish the Hopf's lemma for the fractional $p-$Laplacian. Building on these ideas, we now present an alternative method to establish a Hopf's lemma for the fractional $a-$Laplacian. This approach is based on constructing a barrier function that involves a scaling of the distance function. However, although this proof overcomes the issue mentioned in \cite[\text{p.\,2}]{Ataei}, it does not yield a lower bound with the expected exponent in the distance function. 

Along this paper we  assume that $A$ is a Young function satisfying the following growth condition:

For our first result we assume  the following growth condition on the Young function $A\in C^2([0,\infty))$: there are $p$ and $q$ such that
\begin{equation}\label{cond.intro1}
0 < p-2 \leq \frac{t a''(t)}{a'(t)} \leq q - 2 <\infty.
\end{equation}By integration by parts twice, \eqref{cond.intro1} yields
\begin{equation} \label{cond.intro}
2 < p \leq \frac{t a(t)}{A(t)} \leq q <\infty.
\end{equation}
Condition \eqref{cond.intro} in the case of the fractional $p-$Laplacian (i.e. $A(t)=t^p$) means that $p>2$. 

In our first result, for continuous supersolutions of the fractional $a-$Laplacian we establish a uniform bound near the boundary expressed in terms of the distance function:

\begin{thm} \label{teo2.intro}
Given $\varepsilon>0$, let $u \in W^{s, A}_0(\Omega)\cap C(\overline \Omega)$ be a weak solution to 
\begin{align} \label{eq.u} 
\begin{cases}
(-\Delta_{a})^s u\geq \varepsilon >0 & \text{ in } \Omega\\
u=0 & \text{ in } \Omega^c,
\end{cases}
\end{align}
where $\Omega$ satisfies the interior sphere condition. Then, for $\rho\in (0,\tfrac12)$ small enough,
$$
u(x)\geq C_\varepsilon d_\Omega(x) \qquad x\in \Omega_\rho=\{x\in\Omega\colon d_\Omega(x)<\rho\}
$$
where $C_\varepsilon$ is a positive constant depending of $\varepsilon$.
\end{thm}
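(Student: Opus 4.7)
My strategy is a comparison argument against an explicit scaled Lipschitz ``cone'' barrier on a single interior ball, exploiting the non-homogeneity of $(-\Delta_a)^s$. For each $x\in\Omega_\rho$ (with $\rho$ small enough), the uniform interior sphere condition produces a boundary point $x_0\in\partial\Omega$ realizing $d_\Omega(x)$ and an interior ball $B=B_R(y_0)\subset\Omega$ tangent to $\partial\Omega$ at $x_0$. The points $y_0,x,x_0$ are then collinear, so $d_B(x)=d_\Omega(x)$, and by the uniformity of $R$ it suffices to produce a constant $c>0$ (independent of $x_0$) with $u(x)\geq c\,d_B(x)$ in $B$.

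As barrier I would take $\phi(z):=(R-|z-y_0|)_+$, which is Lipschitz, supported on $\overline B$, and satisfies $\phi(z)=d_B(z)$ on $B$; then consider the one-parameter family $v_t:=t\phi$ for $t\in(0,1]$. Because the sign factor $D^s\phi/|D^s\phi|$ is homogeneous of degree zero in $\phi$,
\[
(-\Delta_a)^s v_t(z)=\text{p.v.}\int_{\mathbb{R}^n}a\bigl(t\,|D^s\phi(z,w)|\bigr)\,\frac{D^s\phi(z,w)}{|D^s\phi(z,w)|}\,\frac{dw}{|z-w|^{n+s}}.
\]
Integrating the lower bound in \eqref{cond.intro1} yields the doubling-type estimate $a(\mu r)\leq \mu^{p-1}a(r)$ for $\mu\in(0,1]$, hence
\[
\bigl|(-\Delta_a)^s v_t(z)\bigr|\leq t^{p-1}\int_{\mathbb{R}^n}a\bigl(|D^s\phi(z,w)|\bigr)\,\frac{dw}{|z-w|^{n+s}}=:t^{p-1}M(z).
\]
The central analytic step is the uniform bound $M(z)\leq M_0<\infty$ for $z\in B$: near the diagonal the Lipschitz regularity of $\phi$ gives $|D^s\phi(z,w)|\leq L|z-w|^{1-s}$ and the leading term of $D^s\phi(z,w)$, odd in $w-z$, produces a symmetric cancellation of the principal value; for $|z-w|$ large, the compact support of $\phi$ forces $|D^s\phi(z,w)|\leq R|z-w|^{-s}$ and, combined with the small-argument bound $a(r)\leq Cr^{p-1}$ coming from \eqref{cond.intro}, the integrand decays like $|z-w|^{-n-sp}$, which is integrable at infinity.

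Once $M_0$ is in hand, choose $t=(\varepsilon/M_0)^{1/(p-1)}$ to obtain $(-\Delta_a)^s v_t\leq \varepsilon$ weakly on $B$. The weak maximum principle applied with the constant $0$ gives $u\geq 0$ in $\Omega$ (using $(-\Delta_a)^s u\geq\varepsilon>0$ and $u=0$ in $\Omega^c$), so $v_t=0\leq u$ on $B^c$; together with $(-\Delta_a)^s u\geq\varepsilon\geq(-\Delta_a)^s v_t$ in $B$, the comparison principle for the fractional $a$-Laplacian yields $u\geq v_t$ in $B$. In particular $u(x)\geq v_t(x)=t\,d_B(x)=t\,d_\Omega(x)$, so $C_\varepsilon=(\varepsilon/M_0)^{1/(p-1)}$. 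The hardest step I expect is the uniform bound on $M(z)$: since $\phi$ is not smooth at $y_0$ nor along $\partial B$, controlling the principal-value integral requires a delicate near/far splitting together with the Orlicz doubling properties from \eqref{cond.intro1}--\eqref{cond.intro}. This is also where the suboptimal exponent arises: a distance-power barrier $\phi\sim d_B^s$ would in principle recover the sharp $d_\Omega^s$ decay, but computing its fractional $a$-Laplacian is substantially more delicate---exactly the obstruction acknowledged in the introduction.
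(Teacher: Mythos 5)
Your proof is correct and takes a genuinely different---and in a sense cleaner---route than the paper's. The paper proves an auxiliary proposition (Proposition \ref{pre.hopf}) about the torsion problem $(-\Delta_a)^s u_R=\beta$ in $B_R$ by invoking Proposition \ref{prop.cont}, a qualitative Arzel\`a--Ascoli compactness statement asserting that $(-\Delta_a)^s(c_k u)\to 0$ uniformly as $c_k\to 0$, together with a scaling identity relating $u_R$ to the solution of a rescaled problem on $B_1$. You avoid both the torsion problem and the compactness argument: your key observation is that the lower bound in \eqref{cond.intro1} integrates to $t a'(t)/a(t)\geq p-1$, hence $a(t)/t^{p-1}$ is nondecreasing and $a(\mu r)\leq \mu^{p-1}a(r)$ for $\mu\in(0,1]$. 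Crucially, you apply this not to the signed kernel (which the authors explicitly warn cannot be controlled because $u(x)-u(y)$ changes sign) but to the absolute-value integral $M(z)=\int a(|D^s\phi(z,w)|)\,|z-w|^{-n-s}\,dw$, yielding $|(-\Delta_a)^s(t\phi)(z)|\leq t^{p-1}M(z)$ and an \emph{explicit} constant $C_\varepsilon$. This neatly sidesteps the non-homogeneity obstruction and replaces the paper's soft-analytic ``smallness-by-compactness'' step by a hard one-line estimate.

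Two small points you should tighten. First, for the near-diagonal integrability of $M(z)$ you do not actually need the ``symmetric cancellation of the principal value'' (which in any case fails at the cone point $y_0$ and on $\partial B$ where $\phi$ is only Lipschitz): the Lipschitz bound $|D^s\phi(z,w)|\leq |z-w|^{1-s}$ plus $a(r)\leq q\,r^{p-1}$ for $r\leq 1$ give $a(|D^s\phi|)\,|z-w|^{-n-s}\lesssim |z-w|^{(p-1)(1-s)-s-n}$, which is locally integrable precisely when $p>1/(1-s)$. This hypothesis is assumed throughout Section~5 of the paper and should be invoked explicitly in your argument; it is in fact the real reason $M(z)$ is finite, and it mirrors the ``local uniform boundedness'' step inside the paper's proof of Proposition \ref{prop.cont}. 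Second, the doubling estimate $a(\mu r)\leq \mu^{p-1}a(r)$ is valid only for $\mu\leq 1$, so your choice should be $t=\min\{1,(\varepsilon/M_0)^{1/(p-1)}\}$ to stay in range; for large $\varepsilon$ the bound $u\geq t\,d_\Omega$ with $t=1$ still holds by the same comparison. With these corrections the argument is complete and gives the same linear-in-distance conclusion as Theorem \ref{teo2.intro}.
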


When the equation involves a constant-sign potential function, we establish the following result concerning the boundary behavior of super-solutions:
\begin{thm} \label{teo1.intro}
Let $\Omega\subset \mathbb R^n$ be an open and  bounded  set satisfying the interior ball condition at $x_0\in\partial\Omega$, let $c\in C(\overline \Omega)$ be such that $c(x)\leq 0$ in $\Omega$ and let $u\in \widetilde W^{s,A}(\Omega)\cap C(\overline \Omega)$ a solution to
\begin{align} \label{eq11}
\begin{cases}
(-\Delta_a)^s u\geq c(x)a(u) &\text{ in }\Omega\\
u>0 &\text{ in } \Omega\\
u\geq 0 &\text{ in } \mathbb R^n\setminus \Omega
\end{cases}
\end{align}
in the weak sense. Then
$$
\liminf_{B_R\ni x\to x_0}\frac{u(x)}{\delta_R(x)}>0
$$
where $B_R\subseteq\Omega$ and $x_0\in\partial B_R$ and $\delta_R(x)$ is the distance from $x$ to $B_R^c$.
\end{thm}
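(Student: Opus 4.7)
The plan is to derive the Hopf-type bound via a weak comparison argument with a subsolution barrier built on top of the construction used in the proof of Theorem~\ref{teo2.intro}, exploiting the strict positivity of $u$ on an interior ball as a substitute for the constant forcing $\varepsilon$. Let $y_R$ denote the centre of the interior ball $B_R$ and put $B':=B_{R/2}(y_R)$, so that $\overline{B'}\subset \Omega$. By continuity of $u$ on $\overline{B'}$ together with $u>0$ in $\Omega$, compactness yields some $m>0$ with $u\geq m$ on $\overline{B'}$; this quantitative positivity plays the role of the $\varepsilon$ in Theorem~\ref{teo2.intro}.

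I would then construct a barrier $\phi\colon \mathbb R^n \to [0,m]$ with $\phi\equiv m$ on $\overline{B'}$, $\phi\equiv 0$ on $B_R^c$, and $\phi(x)\geq c_0\,\delta_R(x)$ for $x$ close to $\partial B_R$ inside the annulus $\mathcal A:=B_R\setminus \overline{B'}$. In $\mathcal A$, $\phi$ is the scaled distance profile used in the proof of Theorem~\ref{teo2.intro}, renormalised to match the boundary data $m$ on $\partial B'$ and $0$ on $\partial B_R$. For $x\in\mathcal A$ I would split
\[
(-\Delta_a)^s\phi(x)=\int_{\overline{B'}}+\int_{\mathcal A}+\int_{B_R^c},
\]
and observe that the integral over $\overline{B'}$ is strictly negative (since $\phi(y)=m>\phi(x)$ there) with magnitude bounded below by a dimensional constant times $a(m/R^s)R^{-n}$; the integral over $B_R^c$ is positive but tends to $0$ as $x\to\partial B_R$ (because $\phi(x)\to 0$); and the integral over $\mathcal A$ can be made small by tuning the slope of $\phi$ in $\mathcal A$ and invoking \eqref{cond.intro} to control $a(|D^s\phi|)$. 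Choosing the free parameters yields
\[
(-\Delta_a)^s\phi(x)\leq c(x)\,a(\phi(x)) \qquad\text{for every } x\in\mathcal A,
\]
which is the desired subsolution inequality (the right-hand side being $\leq 0$).

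Because $-c\geq 0$ and $a$ is nondecreasing, the operator $Lv:=(-\Delta_a)^s v - c(x)a(v)$ is monotone, so the weak comparison principle applies on $\mathcal A$ with external data on $\overline{B'}\cup B_R^c$. The inequalities $\phi\leq u$ on $\overline{B'}$ (by the choice of $m$), $\phi=0\leq u$ on $B_R^c$ (since $u\geq 0$ on $\mathbb R^n$), and $L\phi\leq 0\leq Lu$ on $\mathcal A$ then give $\phi\leq u$ throughout $\mathcal A$. Combined with $\phi(x)\geq c_0\,\delta_R(x)$ for $x$ near $x_0$, this yields $\liminf_{x\to x_0} u(x)/\delta_R(x)\geq c_0>0$, as required.

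The main obstacle is the subsolution estimate. Unlike Theorem~\ref{teo2.intro}, where the right-hand side is the strictly positive constant $\varepsilon$, here $c(x)a(\phi)$ is nonpositive and \emph{vanishes at the same rate as $\phi$ near $\partial B_R$}. Consequently $(-\Delta_a)^s\phi$ must be strictly negative throughout $\mathcal A$, with magnitude compatible with this boundary degeneracy: the negative interior contribution coming from $\phi=m$ on $\overline{B'}$ must be quantitatively large enough to dominate both the positive exterior contribution and the fluctuations of $c\cdot a(\phi)$. Achieving this delicate balance, in which \eqref{cond.intro} enters essentially, is the technical heart of the argument; as the authors note, the resulting linear-in-$\delta_R$ bound falls short of the sharper $\delta_R^s$ decay expected in the fractional setting.
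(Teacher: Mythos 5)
Your proposal has the right strategic flavour — exploit the quantitative positivity of $u$ on a compact set interior to $\Omega$ to manufacture a strictly negative contribution to $(-\Delta_a)^s$ of a barrier — but the specific barrier you build does not carry this out, and the gap is not cosmetic.

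The problem is that your $\phi$ is a \emph{continuous} function that interpolates from $m$ on $\partial B'$ down to $0$ on $\partial B_R$. For the split
$(-\Delta_a)^s\phi(x)=\int_{\overline{B'}}+\int_{\mathcal A}+\int_{B_R^c}$
to be useful, you need the first piece to be uniformly (in $x\in\mathcal A$) bounded away from zero and negative. It is not. As $x\to\partial B'$ from outside, $\phi(x)\to m$, so $\phi(x)-\phi(y)\to 0$ for $y\in\overline{B'}$; under the standing hypothesis $p>1/(1-s)$, the resulting singularity is integrable and $\int_{\overline{B'}}a\!\left(\frac{\phi(x)-m}{|x-y|^s}\right)\frac{dy}{|x-y|^{n+s}}\to 0$. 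Meanwhile, for such $x$ the exterior piece $\int_{B_R^c}a\!\left(\frac{\phi(x)}{|x-y|^s}\right)\frac{dy}{|x-y|^{n+s}}$ is a fixed positive quantity of order $a(m/R^s)R^{-s}$, and the middle piece is also essentially positive (since $\phi(y)<\phi(x)\approx m$ for most $y\in\mathcal A$). So $(-\Delta_a)^s\phi(x)>0$ near $\partial B'$, which is incompatible with the subsolution inequality $(-\Delta_a)^s\phi\leq c\,a(\phi)\leq 0$ you need on all of $\mathcal A$. Moreover, the claim that you can tune the slope of $\phi$ in $\mathcal A$ to make $\int_{\mathcal A}$ small is vacuous: the slope is dictated by the boundary data $m$ on $\partial B'$, $0$ on $\partial B_R$, and the fixed width $R/2$.

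The paper avoids exactly this obstruction by using a \emph{discontinuous} barrier
$\underline u=\lambda\,d+\chi_D\,u$ with $D\subset\subset B_1^c(e_n)\cap\Omega$, i.e.\ the extra mass is a jump sitting in a compact set disjoint from, and at positive distance from, the comparison region $B_1(e_n)\cap B_r(0)$. The jump guarantees, via Lemma~\ref{lema1} and \cite[Lemma C.5]{FBSV}, a correction term $h(x)\leq -\widetilde M_0<0$ that is uniform over the comparison region, precisely what a continuous profile cannot deliver. The parameter $\lambda$ is then decoupled: it only controls $(-\Delta_a)^s(\lambda d)$, which Corollary~\ref{coro1} makes small, and the small value of $u$ near $\partial\Omega$ is handled by shrinking $r$ so that $a(M_2)\|c\|_\infty<\widetilde M_0$. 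Note also that the paper never needs a comparison principle for $Lv=(-\Delta_a)^s v-c\,a(v)$ (which you invoke but the paper does not prove); instead it chains
$(-\Delta_a)^s\underline u\leq\beta-\widetilde M_0\leq -a(M_2)\|c\|_\infty\leq c(x)a(u)\leq(-\Delta_a)^s u$
and applies Proposition~\ref{compara} for $(-\Delta_a)^s$ alone.

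To rescue your line of argument you would essentially have to replace the continuous profile by one with a built-in jump across a compact set where $u$ is bounded below, at which point you would be reproducing the paper's construction. As written, the interior contribution does not satisfy the uniform negativity you assert, and the subsolution inequality fails near $\partial B'$.
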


Finally, as a counterpart to Theorem \ref{teo1.intro}, we investigate the validity of Hopf's boundary lemma without imposing any sign conditions on the function $c$. Our approach is based primarily on adapting \cite[Theorem 3.1]{OS} and \cite{DSV}, with careful modifications to account for the non-homogeneity of the operator.

Given  a point $x_0\in \partial \Omega$ where the interior ball condition is satisfied, we consider the class $\mathcal{Z}_{x_0}$ consisting of functions $u: \mathbb{R}^{n}\to \mathbb{R}$ such that $u$ is continuous in $\mathbb{R}^{n}$, $|u|>0$  in $B_r(x_r)$ for all sufficiently small $r$, $u(x_0)=0$ and the following growth condition is true
\begin{equation*}
\limsup_{r\to 0}\Phi(r)=+\infty,\quad \text{ where }\quad \Phi(r):= \dfrac{(\inf_{B_{r/2}(x_r)}|u|)^{p-1}}{r^{p s}},
\end{equation*}
where $p$ is given in \eqref{cond.intro}.

For this class of functions we have the following:

\begin{thm}\label{teo3.intro}
Let $\Omega \subset \mathbb{R}^n$ be open and bounded and $x_0 \in \partial \Omega$. Assume that $\Omega$ satisfies the interior ball condition at $x_0$.
Let $u\in \mathcal{Z}_{x_0}$, such that $u^{-}\in L^{\infty}(\mathbb{R}^{n})$, and
$$
(-\Delta_a)^{s}u \geq c(x)a(u)  \quad \text{ in }\Omega$$
weakly, where $c \in L^{1}_{loc}(\Omega)$ with $c^{-}\in L^{\infty}(\Omega)$.
Further, suppose that there is $R>0$ such that $u \geq 0$ in $B_R(x_0)$ and $u > 0 $ in $B_R(x_0)\cap \Omega$. Then, for every $\beta \in (0, \pi/2)$, the following strict inequality holds
\begin{equation}\label{normal derivative at boundary}
\liminf_{\Omega\ni x\to x_0}\dfrac{u(x)-u(x_0)}{|x-x_0|} > 0,
\end{equation}whenever the angle between $x-x_0$ and the vector joining $x_0$ and the center of the interior ball is smaller than $\pi/2-\beta$. 
\end{thm}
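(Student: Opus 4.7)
After translation and rotation we may assume $x_0=0$ and that the inner unit normal at $x_0$ is $e_n$, so the interior ball at $x_0$ is $B_R(Re_n)$ and the point $x_r:=re_n$ satisfies $\overline{B_{r/2}(x_r)}\subset B_R(Re_n)\subset\Omega$ for all small $r$. The positivity hypothesis $u>0$ in $B_R(x_0)\cap\Omega$ guarantees $\alpha_r:=\inf_{B_{r/2}(x_r)}u>0$ for each such $r$, and the hypothesis $u\in\mathcal{Z}_{x_0}$ supplies a sequence $r_k\downarrow 0$ along which $\Phi(r_k)=\alpha_{r_k}^{p-1}/r_k^{ps}\to\infty$. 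Because $u(x_0)=0$, the target inequality \eqref{normal derivative at boundary} is equivalent to producing constants $c_0=c_0(\beta)>0$ and $\rho_0>0$ such that $u(x)\geq c_0\,|x|$ for every $x$ in the cone $K_\beta:=\{x\in\mathbb{R}^n: x\cdot e_n\geq |x|\sin\beta\}$ with $|x|<\rho_0$.

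Adapting the scheme of \cite[Theorem 3.1]{OS} and the comparison argument of \cite{DSV}, the plan is, for each sufficiently large $k$, to construct a continuous barrier $\psi_k\colon\mathbb{R}^n\to[0,\infty)$ enjoying the three properties: (i) $\psi_k\leq u$ on $\overline{B_{r_k/2}(x_{r_k})}\cup(\mathbb{R}^n\setminus B_{\rho_0}(x_0))$; (ii) $(-\Delta_a)^s\psi_k\leq c(x)a(\psi_k)$ weakly in the annular region $\mathcal{A}_k:=B_{\rho_0}(x_0)\setminus \overline{B_{r_k/2}(x_{r_k})}$; (iii) $\psi_k(x)\geq c_0\,(x\cdot e_n)_+$ on $K_\beta\cap B_{\rho_0}(x_0)$ with $c_0>0$ independent of $k$. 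A natural candidate is a truncated and scaled distance-type function supported in $B_R(Re_n)$, modulated by a radial cut-off, with the normalization $\psi_k(x_{r_k})=\alpha_{r_k}$. Once (i)--(iii) are in place, the weak comparison principle for $(-\Delta_a)^s$ -- valid under \eqref{cond.intro1} thanks to the strict monotonicity of the integrand -- yields $u\geq \psi_k$ throughout $\mathcal{A}_k$; letting $k\to\infty$ and invoking (iii) then gives the conclusion.

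The chief technical obstacle lies in verifying (ii). Since $(-\Delta_a)^s$ is not homogeneous, one cannot merely rescale the $p$-Laplacian argument of \cite{OS}: the singular integral defining $(-\Delta_a)^s\psi_k(x)$ must be estimated by hand using the two-sided bounds \eqref{cond.intro1}--\eqref{cond.intro} on $a$ and $A$. Splitting this integral into a near-diagonal part (controlled through the convexity of $A$ and the lower exponent $p$) and a tail (controlled by the $L^\infty$ bound on $c^-$) leaves a residual term of order $a(\alpha_{r_k}/r_k^s)/r_k^s$; the divergence $\Phi(r_k)\to\infty$ coming from the class $\mathcal{Z}_{x_0}$ is exactly what absorbs this residual and closes the inequality. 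This balance is the nonhomogeneous analogue of the exponent-matching step of \cite[Theorem 3.1]{OS}, and is where the defining growth condition of $\mathcal{Z}_{x_0}$ plays its essential role.
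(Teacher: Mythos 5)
Your overall strategy (barrier plus comparison, with the growth condition $\Phi(r_k)\to\infty$ used to dominate the nonlinearity) is in the right spirit, but the plan has a genuine conceptual gap and a misdescribed barrier.

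The gap is in (iii). You ask for a family of barriers $\psi_k$, normalized so that $\psi_k(x_{r_k})=\alpha_{r_k}$, satisfying $\psi_k(x)\geq c_0\,(x\cdot e_n)_+$ on $K_\beta\cap B_{\rho_0}(x_0)$ with $c_0>0$ independent of $k$. This cannot hold: $\alpha_{r_k}=\inf_{B_{r_k/2}(x_{r_k})}u\to 0$ as $k\to\infty$ because $u$ is continuous with $u(x_0)=0$, so if $\psi_k$ is normalized by $\alpha_{r_k}$ then $\psi_k\to 0$ on any fixed compact set, whereas $c_0(x\cdot e_n)_+$ stays fixed. The role of the condition $\limsup\Phi(r)=\infty$ is not to run a sequence of barriers to infinity; it is to select \emph{one} sufficiently small radius $r$ for which $\Phi(r)$ dominates the $O(1)$ constants coming from $\|c^-\|_\infty$ and from the tail correction $u^-$. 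Once that single $r$ is fixed, the comparison is performed only once, and the final lower bound $\liminf_{x\to x_0}\frac{u(x)}{|x-x_0|}\geq \frac{\alpha_r c_\beta}{2r^2}$ is a positive constant depending on that $r$ --- no uniformity in $k$ is needed or available.

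Two further issues. First, the comparison domain should not be $B_{\rho_0}(x_0)\setminus\overline{B_{r_k/2}(x_{r_k})}$, a fixed neighbourhood of the boundary point minus a shrinking ball; it should be the thin inner annulus $B_r(x_r)\setminus\overline{B_{r(1-\rho)}(x_r)}$ inside the interior tangent ball, so that the barrier is supported in $B_r(x_r)\subset\Omega$ and the outer boundary condition $\psi_r\leq u$ can be enforced via $\psi_r=0$ outside the ball together with the correction $w=\psi_r-u^-$. Second, the barrier is not merely ``a truncated and scaled distance-type function modulated by a radial cut-off'': it is the sum $\psi_r(x)=\frac{\alpha_r}{2}\,d\bigl(\frac{x-x_r}{r}\bigr)+\frac{C_r\alpha_r}{2}\,\eta\bigl(\frac{x-x_r}{r}\bigr)$, where $\eta\in C_0^\infty(B_{1-2\rho})$ is a bump supported deep inside the ball. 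The bump term is indispensable: its fractional $a$-Laplacian, evaluated on the outer annulus, produces the large negative term $-c\Phi(r)$ that absorbs both the residual from Proposition~\ref{prop.cont} (which handles the non-homogeneity you correctly flag) and the tail constant $C^*$ coming from $u^-$. A barrier built only from the distance function would not provide this negative term, and your verification of (ii) would fail.
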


A core element  of our proofs is the calculation of the fractional $a-$Laplacian of the  scaled distance function. Unlike the case of powers (i.e. the fractional $p-$Laplacian),  the lack of homogeneity of the operator prevents us  from using  \eqref{cond.intro} to conclude that 
$$
(-\Delta_a)^s(c d^s(x)) \leq C (-\Delta_a)^s u(x)
$$
for some constant depending on $c$, $p$ and $q$. The issue stems from the fact that the sign of $u(x)-u(y)$ is not constant for different values of $x$ and $y$. See section \ref{sec4} for a discussion of this limitation. However, since our barrier arguments only require  $(-\Delta_a)^s(c u(x))$ to be sufficiently small when $c$ is small, it is enough for our purposes to examine certain continuity properties of the operator. The key proposition in our paper is stated in Proposition \ref{prop.cont}:

\begin{prop*}
Let $A$ be a Young function satisfying \eqref{cond.intro1} with $p>\max\{\frac{1}{1-s},2\}$, and let $\Omega\subset \mathbb{R}^n$ be a bounded domain. Let $u\in Lip(\mathbb{R}^n)\cap L^\infty(\mathbb{R}^n)$. Then $(-\Delta_a )^s u$ is continuous at 0 in the sense that if $\{c_k\}_k \subset \mathbb{R}$ is a sequence such that $\lim_{k\to\infty} c_k = 0$ then
$$
\lim_{k\to\infty} (-\Delta_a)^s (c_k u(x)) =0
$$
uniformly for all $x\in \overline \Omega$.
\end{prop*}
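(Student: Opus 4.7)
The plan is to prove the proposition by bounding $|(-\Delta_a)^s(c_k u)(x)|$ uniformly in $x\in\overline\Omega$ by a constant multiple of a positive power of $|c_k|$. The key algebraic input is the submultiplicativity
$$
a(\lambda t)\leq \lambda^{p-1}\,a(t),\qquad \lambda\in (0,1],\ t\geq 0,
$$
which is a standard consequence of \eqref{cond.intro1}: since $p-2>0$, integrating the upper bound yields $A''(\lambda t)\leq \lambda^{p-2}A''(t)$, and a further integration together with $a(0)=0$ gives the displayed inequality. Applying it with $\lambda=|c_k|\leq 1$ and $t=|D^s u(x,y)|$, noting that $D^s(c_k u)=c_k\,D^s u$, and taking absolute values inside the principal-value integral leads to
$$
\bigl|(-\Delta_a)^s(c_k u)(x)\bigr|\leq |c_k|^{p-1}\int_{\mathbb{R}^n}\frac{a(|D^s u(x,y)|)}{|x-y|^{n+s}}\,dy.
$$
Hence the proof reduces to showing that this integral is bounded by a constant independent of $x\in\overline\Omega$, as then $c_k\to 0$ closes the argument (recall $p>2$, so $p-1>0$).

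To estimate the integral I split $\mathbb{R}^n$ into the regions $\{|x-y|>1\}$ and $\{|x-y|\leq 1\}$. On the exterior region, the bound $|D^s u(x,y)|\leq 2\|u\|_\infty$ yields $a(|D^s u(x,y)|)\leq a(2\|u\|_\infty)$, and polar integration of $|x-y|^{-n-s}$ on $\{|x-y|>1\}$ produces a finite contribution depending only on $n,s,\|u\|_\infty$. On the interior region I invoke the Lipschitz bound $|D^s u(x,y)|\leq L|x-y|^{1-s}$ combined with the power-type estimate $a(t)\leq C t^{p-1}$ for $t\in[0,1]$ (a further consequence of \eqref{cond.intro1}); this controls the integrand near the singularity by a constant multiple of $|x-y|^{(1-s)(p-1)-n-s}$. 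A possible intermediate annulus where $L|x-y|^{1-s}>1$ is bounded away from $y=x$ and therefore contributes only a finite amount by monotonicity of $a$. Converting to polar coordinates, the convergence at $y=x$ demands exactly $(1-s)(p-1)>s$, i.e.\ $p>1/(1-s)$.

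Combining both pieces produces a uniform bound
$$
\int_{\mathbb{R}^n}\frac{a(|D^s u(x,y)|)}{|x-y|^{n+s}}\,dy\leq C\bigl(n,s,p,q,\|u\|_\infty,L\bigr),
$$
independent of $x\in\overline\Omega$, and hence $|(-\Delta_a)^s(c_k u)(x)|\leq C|c_k|^{p-1}\to 0$ uniformly on $\overline\Omega$ as $c_k\to 0$. The main technical obstacle is localized entirely at the singularity $y=x$: the far-field decay is harmless thanks to $u\in L^\infty$, whereas the near-field contribution rests on the interplay between the Lipschitz regularity of $u$ and the lower power law $a(t)\sim t^{p-1}$, which yields an integrable kernel only under the sharp threshold $p>1/(1-s)$ — precisely the additional hypothesis imposed beyond the basic growth condition \eqref{cond.intro1}.
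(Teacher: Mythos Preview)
Your argument is correct and in fact more direct than the paper's. The paper proceeds by showing that the family $f_k(x):=(-\Delta_a)^s(c_k u)(x)$ is uniformly bounded and uniformly equicontinuous on $\overline\Omega$ (both via a near/far split $|z|\lessgtr 1$ together with Lemma~\ref{desig}), and then invokes the Arzel\`a--Ascoli theorem to obtain uniform convergence, the limit being identified as $0$ through pointwise convergence. Your route bypasses the compactness machinery entirely: the submultiplicativity $a(\lambda t)\le\lambda^{p-1}a(t)$ for $\lambda\in(0,1]$ extracts an explicit factor $|c_k|^{p-1}$, so once the integral $\int_{\mathbb R^n}a(|D^su(x,y)|)\,|x-y|^{-n-s}\,dy$ is shown to be finite uniformly in $x\in\overline\Omega$ (your same near/far split), the conclusion follows immediately and comes with a quantitative rate. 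What the paper's approach buys is that it never appeals to the scaling inequality for $a$, relying only on the increment estimate of Lemma~\ref{desig}; what yours buys is brevity and the explicit bound $|(-\Delta_a)^s(c_ku)|\le C|c_k|^{p-1}$, which is strictly more than the statement asks. Two small points of presentation: in deriving $a'(\lambda t)\le\lambda^{p-2}a'(t)$ you should integrate the \emph{lower} bound $p-2\le ta''(t)/a'(t)$ in \eqref{cond.intro1}, not the upper one; and it is logically cleaner to establish the absolute convergence of $\int a(|D^su|)\,|x-y|^{-n-s}\,dy$ \emph{before} moving the absolute value inside the principal-value integral, since that finiteness is exactly what legitimizes dropping the p.v.\ and applying the triangle inequality.
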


The paper is organized as follows. In  Section \ref{preliminares}, we introduce the definition of Young functions, their main properties and the  framework of Orlicz-Sobolev spaces. Next, in Section \ref{preliminary results} we discuss results regarding weak solutions of the fractional $a-$Laplacian.  Continuity properties of $(-\Delta_a)^s$ are provided in Section \ref{sec4}, while our main contributions Theorem \ref{teo1.intro}, Theorem \ref{teo2.intro} and Theorem \ref{teo3.intro} are discussed in Sections \ref{sec5}, \ref{sec6} and \ref{sec7}, respectively.  

\section{Preliminaries}\label{preliminares}

In this section, we give some basic definitions and results related to Orlicz and Orlicz-Sobolev  spaces. We start recalling the definition of an Young.

\subsection{Young functions}
\begin{defi}\label{d2.1}
A function $A \colon [0, \infty) \rightarrow \mathbb{R}$ is called a Young function if it admits the representation
$$A(t)= \int _{0} ^{t} a(\tau) d\tau,$$
where the function $a$ is right-continuous for $t \geq 0$,  positive for $t >0$, non-decreasing and satisfies the conditions
$$a(0)=0, \quad a(\infty)=\lim_{t \to \infty}a(t)=\infty.$$
\end{defi}
\noindent By \cite[Chapter 1]{KR}, a Young function has also the following properties:
\begin{enumerate}
\item[(i)] $A$ is continuous, convex, increasing, even and $A(0) = 0$.
\item[(ii)] $A$ is super-linear at zero and at infinite, that is $$\lim_{x\rightarrow 0} \dfrac{A(x)}{x}=0$$and
$$\lim_{x\rightarrow \infty} \dfrac{A(x)}{x}=\infty.$$
\end{enumerate}

\noindent We will assume that $A$ is extended as a odd function to the whole $\mathbb{R}$.
		
An important property for Young functions is the following:
\begin{defi}

A Young function  $A$ satisfies the $\bigtriangleup_{2}$ condition if  there exists $C > 2$ such that
\begin{equation*}
A(2x) \leq C A(x) \,\,\text{~~~for all~~} x \in \mathbb{R}_+.
\end{equation*}
\end{defi}

\noindent By \cite[Theorem 4.1, Chapter 1]{KR},  a Young function  satisfies $\bigtriangleup_{2}$ condition if and only if there is $q > 1$ such that
\begin{equation}\label{eq p mas}
\frac{ta(t)}{A(t)} \leq q, ~~~~~\forall\, t>0.
\end{equation}

\noindent  Associated to $A$ is  the Young function  complementary to it which is defined as follows:
\begin{equation}\label{Gcomp}
\widetilde{A} (t) := \sup \left\lbrace tw-A(w) \colon w>0 \right\rbrace .
\end{equation}

The definition of the complementary function assures that the following Young-type inequality holds
\begin{equation}\label{2.5}
st \leq A(t)+\widetilde{A} (s) \text{  for every } s,t \geq 0.
\end{equation}

By \cite[Theorem 4.3,   Chapter 1]{KR}, a necessary and sufficient condition for the Young function $\widetilde{A} $ complementary to $A$ to satisfy the $\bigtriangleup_{2}$ condition is that there is $p > 1$ such that
\begin{equation}
p \leq 	\frac{ta(t)}{A(t)}, ~~~~~\forall\, t>0.
\end{equation}
	
From now on, we will assume that the Young function  $A(t)= \int _{0} ^{t} a(\tau) d\tau$  satisfies the following growth behavior:
\begin{equation}\label{G1}
	0 < p-2 \leq \frac{ta''(t)}{a'(t)} \leq q-2 < \infty, ~~~~~\forall t>0.
\end{equation}which, in particular, gives that $A$ and $\tilde{A}$ satisfy the $\bigtriangleup_2$ condition. 

Relation \eqref{G1} gives a lower and upper control of $A$ and its derivatives  in terms of power functions:

\begin{align} \label{potencias}
\begin{split}
\min\{t^p, t^q\}&\leq A(t) \leq \max\{t^p, t^q\},\\
p\min\{t^{p-1}, t^{q-1}\}&\leq a(t) \leq q \max\{t^{p-1}, t^{q-1}\},\\
p(p-1)\min\{t^{p-2}, t^{q-2}\}&\leq a'(t) \leq q(q-1) \max\{t^{p-2}, t^{q-2}\}.
\end{split}
\end{align}
As a consequence $a'(t)>0$ for $t>0$, which yields by \eqref{G1} that $a''(t)>0$. Thus,  $a'(t)$ is strictly increasing for all $t>0$.
\normalcolor

The following lemma is proved in \cite[Lemma 2.1]{MSV}. 
\begin{lem} \label{lema1}
There exists $C>0$ depending only of $p$ and $q$ such that
$$
a(t_2)-a(t_1)\geq C a(t_2-t_1)
$$
for all $t_2\geq t_2$.
\end{lem}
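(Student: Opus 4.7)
The plan is to exploit the convexity of $a$ together with the $\Delta_2$ condition, splitting into cases according to the signs of $t_1$ and $t_2$. Condition \eqref{G1} gives $a''>0$ on $(0,\infty)$, so $a$ is strictly convex on $[0,\infty)$ with $a(0)=0$; moreover, \eqref{potencias} (equivalently, the fact that both $A$ and $\widetilde{A}$ satisfy $\Delta_2$) furnishes a constant $C_\ast=C_\ast(p,q)\geq 1$ with $a(2t)\leq C_\ast a(t)$ for every $t\geq 0$.

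First I would treat the sign-preserving case $0\leq t_1\leq t_2$. The central observation is the superadditivity of any convex function $f\colon[0,\infty)\to[0,\infty)$ with $f(0)=0$: writing $x=\tfrac{x}{x+y}(x+y)+\tfrac{y}{x+y}\cdot 0$ and $y=\tfrac{y}{x+y}(x+y)+\tfrac{x}{x+y}\cdot 0$, Jensen's inequality yields $f(x)+f(y)\leq f(x+y)$. Applying this with $f=a$, $x=t_1$, $y=t_2-t_1$, we conclude
\[
a(t_2)=a\bigl(t_1+(t_2-t_1)\bigr)\geq a(t_1)+a(t_2-t_1),
\]
so $a(t_2)-a(t_1)\geq a(t_2-t_1)$ with constant $C=1$. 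The case $t_1\leq t_2\leq 0$ then follows immediately because $a$ is extended as an odd function: one sets $s_1=-t_2$, $s_2=-t_1$ and reduces to the previous inequality.

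The main obstacle is the mixed-sign case $t_1<0<t_2$, where superadditivity points the wrong way. Here $a(t_2)-a(t_1)=a(t_2)+a(-t_1)$ while $a(t_2-t_1)=a(t_2+(-t_1))$, so one needs an upper bound on $a$ of a sum in terms of $a$ of its summands. Without loss of generality assume $t_2\geq -t_1>0$ (the reverse being symmetric by oddness); then $t_2-t_1\leq 2t_2$, and monotonicity together with the $\Delta_2$ estimate give
\[
a(t_2-t_1)\leq a(2t_2)\leq C_\ast a(t_2)\leq C_\ast\bigl(a(t_2)+a(-t_1)\bigr)=C_\ast\bigl(a(t_2)-a(t_1)\bigr).
\]
Taking $C:=\min\{1,1/C_\ast\}=1/C_\ast$, which depends only on $p$ and $q$, covers all cases and proves the lemma. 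The delicate point is precisely this mixed-sign estimate, where the proof must abandon the sharp superadditivity constant $1$ in exchange for the $\Delta_2$-dependent constant $1/C_\ast$.
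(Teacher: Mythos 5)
The paper does not prove this lemma itself; it simply cites \cite[Lemma 2.1]{MSV}, so there is no in-house argument to compare against. Your proof is correct and takes the natural route for such inequalities.

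One small bookkeeping point: the doubling bound $a(2t)\leq C_\ast a(t)$ does not follow from \eqref{potencias} alone, because the quotient of the upper and lower power envelopes there, namely $\tfrac{q}{p}\max\{t^{q-p},t^{p-q}\}$, is unbounded as $t\to 0$ or $t\to\infty$. The right source is the upper bound in \eqref{G1}: one integration by parts gives $t\,a'(t)/a(t)\leq q-1$, hence $a(\lambda t)\leq \lambda^{q-1}a(t)$ for every $\lambda\geq 1$, so $C_\ast=2^{q-1}$. Your parenthetical appeal to a $\Delta_2$-type property for $a$ itself is the correct intuition; just attach it to \eqref{G1} rather than to \eqref{potencias}. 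With that fixed, your three cases are all sound: superadditivity of the convex function $a$ with $a(0)=0$ handles $0\leq t_1\leq t_2$ with constant $1$; oddness of $a$ reduces $t_1\leq t_2\leq 0$ to the previous case; and in the mixed-sign case $t_1<0<t_2$, assuming $t_2\geq -t_1$ (the other sub-case is symmetric), monotonicity and the doubling bound give $a(t_2-t_1)\leq a(2t_2)\leq 2^{q-1}a(t_2)\leq 2^{q-1}\bigl(a(t_2)+a(-t_1)\bigr)=2^{q-1}\bigl(a(t_2)-a(t_1)\bigr)$, yielding $C=2^{1-q}$, which depends only on $q$ (hence on $p,q$) as required.
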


We end the section with the following lemma
\begin{lem} \label{desig}
Let $A$ be a Young function satisfying \eqref{G1}, and $r,t\in \mathbb{R}$, then
$$
|a(r+t)-a(t)| \leq q(q-1) \max\{ (|r|+|t|)^{p-2}, (|r|+|t|)^{q-2}\}|r|.
$$
\end{lem}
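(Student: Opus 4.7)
The plan is to reduce the lemma to the pointwise upper bound on $a'$ supplied by \eqref{potencias}, using the fundamental theorem of calculus. I would begin by writing
\[
a(r+t)-a(t) \;=\; r\int_0^1 a'(t+\sigma r)\,d\sigma,
\]
which is legitimate because condition \eqref{G1} together with \eqref{potencias} forces $a'$ to be continuous on $\mathbb{R}$ (with $a'(0)=0$ since $p>2$).

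Next I would promote the bound $a'(\tau)\leq q(q-1)\max\{\tau^{p-2},\tau^{q-2}\}$ from \eqref{potencias}, which is stated only for $\tau>0$, to an estimate on $|a'(\tau)|$ valid for all $\tau\in\mathbb{R}$. This follows from the extension convention adopted in the Preliminaries, which gives $|a'(\tau)| = a'(|\tau|)$, and hence
\[
|a'(\tau)| \;\leq\; q(q-1)\max\{|\tau|^{p-2},|\tau|^{q-2}\}\qquad\text{for every }\tau\in\mathbb{R}.
\]

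For $\sigma\in[0,1]$, the triangle inequality yields $|t+\sigma r|\leq |t|+|r|$. Because $p,q>2$, the function $x\mapsto \max\{x^{p-2},x^{q-2}\}$ is non-decreasing on $[0,\infty)$, so
\[
|a'(t+\sigma r)| \;\leq\; q(q-1)\max\{(|t|+|r|)^{p-2},(|t|+|r|)^{q-2}\}.
\]
Inserting this into the integral representation and taking absolute values produces the desired inequality after pulling the constant factor $|r|$ out of the integral.

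No genuine obstacle is anticipated; the only mildly technical point is reconciling the pointwise bound from \eqref{potencias} (phrased for positive arguments) with the extended definition of $a$ on the whole real line, but this is handled automatically by the parity of $a'$ together with positivity of $a'$ on $(0,\infty)$.
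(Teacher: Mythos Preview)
The proposal is correct and follows essentially the same approach as the paper: express $a(r+t)-a(t)$ as an integral of $a'$ and then invoke the pointwise upper bound on $a'$ from \eqref{potencias}. Your version is in fact slightly more careful than the paper's sketch (which bounds the integral directly by $|a'(r+t)|\,|r|$ using that $a'$ is increasing, without explicitly tracking signs or the extension of $a'$ to all of $\mathbb{R}$).
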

\begin{proof}
Since $a'$ is increasing, using \eqref{potencias}
\begin{align*}
a(r+t)-a(t) &= \int_{t}^{r+t} a'(t)\,dt \leq |a'(r+t)||r|\\
& \le q(q-1) \max\{ (|r|+|t|)^{p-2}, (|r|+|t|)^{q-2}\}|r|,
\end{align*}
which concludes the proof.
\end{proof}

\subsection{Orlicz-fractional Sobolev spaces}
Given a Young function $A$ with $a=A'$, $s \in (0, 1)$, 
and an open set $\Omega \subseteq \mathbb{R}^{n}$, 
we consider the spaces:
\begin{align*}
    & L^{A}(\Omega) =
\Big \lbrace u: \Omega \to \mathbb{R} \colon 
    \Phi_{A, \Omega}(u) < \infty \Big \rbrace,\\[5pt]
    &W^{s, A}(\Omega)= 
\Big\lbrace u \in L^{G}(\Omega)\colon 
    \Phi_{s, A, \Omega}(u) < \infty \Big \rbrace, \text{ and }\\[5pt]
  &L_a(\mathbb{R}^n)=\Big\lbrace u \in 
    L^{1}_{loc}(\mathbb{R}^{n})\colon 
    \int_{\mathbb{R}^{n}}a \Big ( \dfrac{|u(x)|}{1+|x|^{s}} \Big ) \dfrac{dx}
    {1+|x|^{n+s}} < \infty\Big\rbrace.
\end{align*}

    Here, the modulars $\Phi_{A, \Omega}$ and $\Phi_{s, A, \Omega}$ are defined as
\[    
\Phi_{A, \Omega}(u) := \int_{\Omega}A(|u(x)|)\,dx, 
\quad\text{ and }\quad
\Phi_{s, A, \Omega}(u) :=
\int_{\Omega\times \Omega}
A \left ( |D^su| \right ) 
    d\nu
\]
being $D^s u = \frac{u(x)-u(y)}{|x-y|^s}$ and $d\nu=\frac{dxdy}{|x-y|^n}$.
Finally, we introduce the space
$$
\widetilde W^{s,A}(\Omega)=\left\lbrace u \in L_a(\mathbb{R}^n): \exists U, \Omega \subset \subset U \text{ and } \|u\|_{W^{s, A}(U)}<\infty\right\rbrace.
$$
\noindent The spaces $L^A(\Omega)$ and $W^{s,A}(\Omega)$ are endowed, 
    respectively, with the following norms
    \[
        \|u\|_{L^A(\Omega)} =
        \inf\left\{\lambda >0 \colon 
        \Phi_{A,\Omega} \left ( \frac{u}{\lambda} \right ) \leq 1\right\},
    \]
    and 
    \[
         \|u\|_{W^{s,A}(\Omega)} = 
         ||u||_{L^A(\Omega)} + [u]_{W^{s,A}(\Omega)},
    \]
    where 
    \[
        [u]_{W^{s,A}(\Omega)} = \inf\left\{\lambda>0 \colon
        \Phi_{s,A,\Omega} \left ( \frac{u}{\lambda} \right ) \leq 1 \right\}.
    \]
    
\noindent  The space $W^{s,A}_0(\Omega)$ is defined as the set of $u\in W^{s, A}(\mathbb{R}^n)$ such that $u=0$ in $\mathbb{R}^n\setminus \Omega$.  Finally,  we recall that under the $\bigtriangleup_2$ condition for $A$ and $\tilde{A}$, all the Orlicz-Sobolev spaces are separable and reflexive spaces.

The following lemma relates modulars and norms in Orlicz spaces.  

\begin{lem}\label{comp norm modular}
Let $A$ be a Young function satisfying \eqref{G1}, and let 
$\xi^\pm\colon[0,\infty)$ $\to\mathbb{R}$ be defined as
\[
\xi^{-}(t)=
\min \big \{  t^{p}, t^{q} \big  \} ,
\quad \text{ and }  \quad
\xi^{+}(t)=\max \big \{  t^{p}, t^{q} \big \} . 
\] 
Then
\begin{itemize}
\item[(i)]
$\xi^{-}(\|u\|_{L^{A}(\Omega)}) \leq \Phi_{A, \Omega}(u) 
\leq  \xi^{+}(\|u\|_{L^{A}(\Omega)})$;

\item[(ii)]
$\xi^{-}([u]_{W^{s, A}(\Omega)}) 
\leq \Phi_{s, A, \Omega}(u) \leq  
\xi^{+}([u]_{W^{s, A}(\Omega)}).$
\end{itemize}
\end{lem}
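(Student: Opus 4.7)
The plan is to reduce both estimates to a single pointwise scaling property for $A$: for every $\lambda,t\ge 0$,
$$
\min\{\lambda^p,\lambda^q\}\,A(t)\;\le\;A(\lambda t)\;\le\;\max\{\lambda^p,\lambda^q\}\,A(t),
$$
and then specialize $\lambda$ to the Luxemburg norm (respectively to the Gagliardo-type seminorm) and integrate against the appropriate measure.

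To establish this scaling inequality, I would first use that \eqref{G1}, after the two integrations by parts mentioned in the introduction, yields $p\le ta(t)/A(t)\le q$ for all $t>0$. Setting $\varphi(t)=A(t)/t^p$ and $\psi(t)=A(t)/t^q$, the logarithmic derivatives $\varphi'(t)/\varphi(t)=a(t)/A(t)-p/t$ and $\psi'(t)/\psi(t)=a(t)/A(t)-q/t$ have definite signs, so $\varphi$ is non-decreasing and $\psi$ is non-increasing on $(0,\infty)$. Comparing $\varphi(\lambda t)$ with $\varphi(t)$ and $\psi(\lambda t)$ with $\psi(t)$ separately in the two regimes $\lambda\ge 1$ and $\lambda\le 1$ gives both displayed inequalities; the bounds recorded in \eqref{potencias} are just the particular case $t=1$ of this identity.

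For item (i), set $\lambda_0=\|u\|_{L^A(\Omega)}$ and assume $\lambda_0>0$ (otherwise $u=0$ a.e.\ and the claim is trivial). Since both $A$ and $\widetilde A$ satisfy the $\bigtriangleup_2$ condition, the modular is continuous on $L^A(\Omega)$ and the infimum defining the Luxemburg norm is attained, so $\Phi_{A,\Omega}(u/\lambda_0)=1$; I would cite this as a standard fact from Orlicz-space theory. Writing $|u(x)|=\lambda_0\cdot|u(x)|/\lambda_0$ and applying the scaling inequality pointwise with $t=|u(x)|/\lambda_0$, integration over $\Omega$ yields
$$
\xi^{-}(\lambda_0)\,\Phi_{A,\Omega}(u/\lambda_0)\;\le\;\Phi_{A,\Omega}(u)\;\le\;\xi^{+}(\lambda_0)\,\Phi_{A,\Omega}(u/\lambda_0),
$$
and the normalization collapses this to (i).

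Item (ii) follows from the same argument: taking $\lambda_0=[u]_{W^{s,A}(\Omega)}$ one has $\Phi_{s,A,\Omega}(u/\lambda_0)=1$ by $\bigtriangleup_2$, and since $D^s$ is linear in $u$ the scaling inequality applies pointwise with $t=|D^s u(x,y)|/\lambda_0$; integrating against $d\nu$ on $\Omega\times\Omega$ closes the proof. I do not expect any genuine obstacle here; the only non-elementary input is the attainment $\Phi(u/\lambda_0)=1$ of the Luxemburg infimum, which is a textbook consequence of the $\bigtriangleup_2$ condition together with dominated convergence.
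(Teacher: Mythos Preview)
Your argument is correct and is the standard textbook proof of this fact. Note, however, that the paper does not actually supply a proof of this lemma: it is recorded among the preliminaries as a known relation between modulars and Luxemburg norms, so there is no ``paper's own proof'' to compare against. The only point worth flagging is that your reduction relies on the equality $\Phi_{A,\Omega}(u/\lambda_0)=1$ (and its Gagliardo analogue); this is indeed a standard consequence of the $\bigtriangleup_2$ condition via the continuity of $\lambda\mapsto\Phi_{A,\Omega}(u/\lambda)$, and it is consistent with the paper's standing hypothesis that both $A$ and $\widetilde A$ satisfy $\bigtriangleup_2$.
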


\subsection{The fractional $a-$Laplacian}
Given a Young function $A$, we define the  fractional $a-$Laplacian of order $s\in (0,1)$ of a sufficiently smooth function $u\colon \mathbb{R}^n\to\mathbb{R}^n$ as
$$
(-\Delta_a)^s u(x) := \text{p.v.} \int_{\mathbb{R}^n} a\left( D^s u(x,y) \right) \frac{dy}{|x-y|^{n+s}}, \quad x\in\mathbb{R}^n,
$$
where $D^s u(x,y):=\frac{u(x)-u(y)}{|x-y|^s}$. Moreover, the following representation formula holds
\begin{equation} \label{rep.form}
\langle (-\Delta_a)^s u, \varphi\rangle:=\dfrac{1}{2}\iint_{\mathbb{R}^n\times\mathbb{R}^n} a\left(D^s u \right) D^s \varphi \frac{dxdy}{|x-y|^n}
\end{equation}
for any   $u, \varphi \in W^{s, A}(\mathbb{R}^n)$. See \cite[Section 6.3]{FBS} for further details.  Moreover, it follows that if $u\in \widetilde W^{s, A}(\Omega)$, then the functional
$$W_0^{s, A}(\Omega) \ni \varphi \mapsto \left\langle u, \varphi\right\rangle,$$is finite, linear and continuous. Hence, the following definition makes sense:
\begin{defi}
We say that $u\in \widetilde W^{s, A}(\Omega)$ is a \emph{weak super-solution} to $(-\Delta_a)^s u =f$ in $\Omega$ if 
$$
\langle (-\Delta_a)^s u, \varphi\rangle \geq \int_\Omega f\varphi\,dx,
$$for any $\varphi\in W^{s, A}_0(\Omega)$, $\varphi\geq 0.$ Similarly, reversing the inequalities we can define \emph{weak sub-solutions} to $(-\Delta_a)^s u=f$.
Finally, $(-\Delta_a)^s u=f$ is satisfied in the \emph{weak sense} if it is simultaneously a weal super- and sub-solution.
\end{defi}

\subsection{The interior ball condition}

Given an open set $\Omega$ of $\mathbb{R}^{n}$ and $x_0 \in \partial \Omega$, we say that $\Omega$ satisfies the  \emph{interior ball condition} at $x_0$ if there is $r_0 >0$ such that, for every $r \in (0, r_0]$, there exists a ball $B_r(x_r) \subset \Omega$ with $x_0 \in \partial \Omega \cap \partial B_r(x_r)$.

By \cite{LY},  $\Omega$ satisfies the interior ball condition if and only if $\partial\Omega\in C^{1,1}$. 

\subsection{Further notation} 

We denote by $\omega_n$ the volume of the unit ball in $\mathbb{R}^n$.

\noindent Given $R>0$, $s>0$ we define the scaled function 
\begin{equation} 
\label{a r} 
a_R(t):=a\left(\frac{t}{R^s} \right).
\end{equation}
Both functions in \eqref{a r} define the same space: denoting $A_R'=a_R$, it holds that
$$
W^{s,A}(\Omega)=W^{s,A_R}(\Omega)
$$
Finally, given $\Omega$ a set in $\mathbb{R}^n$, the distance function $d_\Omega$ is defined as
$$d_\Omega(x):=\dist(x, \Omega^c),$$
where $\dist(x, \Omega^c):=\inf\left\lbrace |x-y|: y\in \Omega^c\right\rbrace$.

\section{Preliminary results on the fractional $a$-Laplacian}\label{preliminary results}

In this section we collect some useful results regarding solutions of the fractional $a-$Laplacian. We start by recalling the following result stated in \cite[Theorem 4.9]{FBSV}.
\begin{prop} \label{prop2}
Let $u\in W^{s,A}_0(\Omega)$ be a weak solution of $|(\Delta_a)^s u |\leq K$ in $\Omega$ for some $K>0$. Then
$$
|u|\leq Cd_\Omega ^s \quad \text{a.e. in }\Omega,
$$
where $C$ is a positive constant depending on $s$, $n$, $A$ and $\Omega$.
\end{prop}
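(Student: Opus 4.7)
The plan is to establish the two-sided boundary decay by a barrier argument combined with the weak comparison principle for $(-\Delta_a)^s$. The target bound $|u(x)|\le C d_\Omega(x)^s$ says that $u$ vanishes at the boundary no slower than $d_\Omega^s$, so I will construct a super-solution shaped like $d^s$ near $\partial\Omega$ and compare $\pm u$ with it on small interior balls tangent to the boundary.

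First, since constants are allowed to depend on $\Omega$, I will use that a domain with the usual boundary regularity (as in Section 2.4) satisfies a uniform interior ball condition: there exists $R_0>0$ such that every $x\in\Omega$ with $d_\Omega(x)<R_0$ lies in some ball $B_R(x_R)\subset\Omega$ with $R\in(0,R_0]$ and $\partial B_R(x_R)\cap\partial\Omega\ni x_{\partial}$ the projection of $x$. On $B_R(x_R)$ one then has $d_\Omega(x)\ge \delta_R(x):=\dist(x,\partial B_R(x_R))$, so it suffices to prove $|u(x)|\le C\,\delta_R(x)^s$ on each such ball, with $C$ independent of the choice of $x$. Away from the boundary strip $\{d_\Omega<R_0\}$, $u$ is bounded by a standard $L^\infty$-bound for the $a$-Laplacian applied to the equation $|(-\Delta_a)^s u|\le K$ (using the $\Delta_2$-condition and \eqref{potencias}), and $d_\Omega^s$ is bounded below there, so the estimate is trivial.

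Next, I will construct the barrier. On the unit ball, set $V(x):=(1-|x|^2)_+^{s}$, and on $B_R(x_R)$ take the rescaled barrier $V_R(y):=\big(R^2-|y-x_R|^2\big)_+^{s}$. The main computation is the pointwise bound
\[
|(-\Delta_a)^s V_R(y)| \le C_1,\qquad y\in B_R(x_R),
\]
with $C_1=C_1(n,s,p,q,R_0)$. To obtain this, split the defining integral into the near region $\{|z-y|\le \delta_R(y)/2\}$ and the far region. On the near region, $V_R$ is $C^{0,s}$ with quantitative control, so $|D^s V_R(y,z)|$ is bounded; using the upper estimate $a(t)\le q\max\{t^{p-1},t^{q-1}\}$ from \eqref{potencias}, the singular part of the integral converges. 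On the far region, $V_R\in L^\infty$, the kernel $|z-y|^{-n-s}$ is integrable, and the same upper bound on $a$ controls the tail. Then, given the forcing $K$, I choose $M=M(K,C_1)$ so large that $(-\Delta_a)^s(MV_R)\ge K$ in $B_R(x_R)$; here I exploit that $(-\Delta_a)^s$ is, for fixed smooth enough profile, increasing in $M$ via the monotonicity of $a$, though without homogeneity the scaling constant is extracted using the inequality $a(Mt)\ge c M^{p-1}a(t)$ for $M\ge 1$ that follows from \eqref{G1}.

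Finally, I will apply comparison on $B_R(x_R)$. Define
\[
w(y):= M V_R(y)+ \|u\|_{L^\infty(\Omega)}\,\mathbf{1}_{\mathbb{R}^n\setminus B_R(x_R)}(y).
\]
Then $w\ge u$ in $\mathbb{R}^n\setminus B_R(x_R)$ (trivially, since $u=0$ in $\Omega^c$ and $u\le\|u\|_\infty$ inside), and $(-\Delta_a)^s w\ge (-\Delta_a)^s(MV_R)\ge K\ge (-\Delta_a)^s u$ in $B_R(x_R)$ by Step 2 (the indicator contribution only increases the $a$-Laplacian at interior points, since the extra positive mass sits at positive distance). The weak comparison principle for the fractional $a$-Laplacian, which holds under the $\Delta_2$ assumption on $A$ and $\widetilde A$ via testing with $(u-w)_+\in W^{s,A}_0(B_R(x_R))$ and monotonicity of $a$, yields $u\le w= M V_R$ inside $B_R(x_R)$. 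Applying the same argument to $-u$ gives the reverse bound, hence $|u(x)|\le M V_R(x)\le C\,\delta_R(x)^s\le C\,d_\Omega(x)^s$.

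The main obstacle is the barrier estimate $|(-\Delta_a)^s V_R|\le C_1$: without the homogeneity available in the $p$-Laplacian case, one cannot simply factor out $s$-powers, and the integrand behaves differently in the two regimes of \eqref{potencias}. The key technical point is tuning the near/far splitting scale against $\delta_R(y)$ so that on the near region $|D^s V_R|$ stays in the regime where $a(t)\le q t^{p-1}$ controls the singularity (this is where one uses $p$ large enough relative to $s$ implicit in the $\Delta_2$-framework), while on the far region the $t^{q-1}$ branch provides an integrable tail.
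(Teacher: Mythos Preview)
The paper does not prove this proposition; it is quoted from \cite[Theorem~4.9]{FBSV}. There is thus no in-paper argument to compare with, but your sketch has a real gap.

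The failure is in the scaling step. You claim that from $|(-\Delta_a)^s V_R|\le C_1$ one obtains $(-\Delta_a)^s(MV_R)\ge K$ for $M$ large, appealing to monotonicity of $a$ and to ``$a(Mt)\ge cM^{p-1}a(t)$ for $M\ge 1$''. Both assertions hold only for $t>0$; since $a$ is odd, for $t<0$ the inequality reverses to $a(Mt)\le M^{p-1}a(t)$, and $M\mapsto a(Mt)$ is decreasing. Because $D^sV_R(y,z)$ changes sign (for $y$ near $\partial B_R(x_R)$ and $z$ near the center one has $V_R(z)>V_R(y)$), the lower bound your argument produces for $(-\Delta_a)^s(MV_R)(y)$ is $M^{p-1}I_+ + M^{q-1}I_-$ with $I_+>0$ and $I_-<0$, which tends to $-\infty$ since $q>p$. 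This is exactly the non-homogeneity obstruction the paper highlights at the start of Section~\ref{sec4}: condition \eqref{G1} does not allow one to control $(-\Delta_a)^s(cu)$ by a multiple of $(-\Delta_a)^s u$, precisely because $u(x)-u(y)$ has no fixed sign. A secondary issue is that the barrier bound $|(-\Delta_a)^s V_R|\le C_1$ is not established by your near/far split: $C^{0,s}$ regularity of $V_R$ gives only $|a(D^sV_R)|\le C'$ on the near region, while the kernel $|z-y|^{-n-s}$ is not integrable there; convergence of the principal value requires a genuine cancellation argument that you do not supply, and this is delicate since $V_R=(R^2-|\cdot-x_R|^2)_+^s$ fails to be $C^1$ up to $\partial B_R$.
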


The following \emph{comparison principle} is stated in \cite[Lemma 3.10]{FBPLS}.

\begin{prop} \label{compara}
Let $\Omega\subset \mathbb{R}^n$ be bounded, $u,v\in W^{s,A}(\Omega)\cap C(\Omega)$ such that $u\leq v$ in $\Omega^c$ and assume that
$$
\langle u, \varphi\rangle \leq \langle v,\varphi\rangle \quad \forall \varphi \in W^{s,A}_0(\Omega)\cap C(\Omega), \, \varphi\geq 0 \text{ in }\Omega.
$$
Then $u\leq v$ in $\Omega$.
\end{prop}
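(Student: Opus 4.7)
The plan is the classical monotone-operator comparison: test with $\varphi = (u-v)^+$ and exploit the strict monotonicity of $a$ together with the fact that, $\Omega$ being bounded, $\Omega^c$ has infinite Lebesgue measure. Set $w := u-v$ and $\varphi := w^+$. Since $u\leq v$ in $\Omega^c$, we have $\varphi \equiv 0$ there, $\varphi \geq 0$, and $\varphi \in C(\Omega)$. Because $t \mapsto t^+$ is $1$-Lipschitz with $0^+=0$, the $\bigtriangleup_2$ condition on $A$ and $\widetilde{A}$ guarantees that $\Phi_{A,\Omega}(\varphi)$ and $\Phi_{s,A,\mathbb{R}^n}(\varphi)$ are controlled by the corresponding modulars of $w$, which places $\varphi \in W^{s,A}_0(\Omega)\cap C(\Omega)$. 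If one prefers to avoid any subtlety at $\partial\Omega$ (since $u,v$ are only assumed continuous inside $\Omega$), one approximates $\varphi$ by standard mollifications of $(w-\varepsilon)^+$, each compactly supported in $\Omega$, and passes to the limit by dominated convergence.

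Subtracting the two weak inequalities against this test function and using the representation formula \eqref{rep.form} yields
\[
\frac{1}{2}\iint_{\mathbb{R}^n\times\mathbb{R}^n}\bigl[a(D^s u)-a(D^s v)\bigr]\,D^s\varphi \,d\nu \leq 0.
\]
By \eqref{potencias}, $a$ is strictly increasing on $(0,\infty)$ and odd by our convention, so the factor $a(D^su)-a(D^sv)$ has the same sign as $D^s w$. A four-case analysis on the signs of $w(x)$ and $w(y)$ shows $D^s w \cdot D^s w^+ \geq 0$ pointwise: both factors vanish when $w(x),w(y)\leq 0$; they coincide when $w(x),w(y)\geq 0$; and in the mixed cases they carry matching signs. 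Hence the integrand is non-negative and must vanish $\nu$-a.e.

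Strict monotonicity of $a$ now forces that at $\nu$-a.e.\ $(x,y)$ either $w(x)=w(y)$ or both $w(x),w(y)\leq 0$. Both alternatives fail simultaneously whenever $x\in\{w>0\}$ and $y\in\{w\leq 0\}$, so Fubini forces one of these two sets to be null; since $\Omega^c\subseteq\{w\leq 0\}$ has infinite Lebesgue measure, necessarily $|\{w>0\}|=0$, i.e.\ $u\leq v$ a.e.\ in $\Omega$, and continuity of $u,v$ inside $\Omega$ upgrades this to $u\leq v$ pointwise. The only delicate point I anticipate is the admissibility step --- verifying that $\varphi=w^+$ is indeed a legitimate test function in $W^{s,A}_0(\Omega)\cap C(\Omega)$; the $\bigtriangleup_2$ framework makes this routine, but the cleanest safeguard is the $(w-\varepsilon)^+$ approximation indicated above.
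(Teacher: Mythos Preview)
Your argument is correct and follows the standard monotone-operator route for comparison principles of this kind. Note, however, that the paper does not supply its own proof of this proposition: it is simply quoted from \cite[Lemma~3.10]{FBPLS}, so there is no in-paper argument to compare yours against. The proof in that reference proceeds along essentially the same lines you describe --- test with $(u-v)^+$ and use the strict monotonicity of $a$.

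The one technicality worth recording is the one you already flag: the hypotheses place $u,v$ in $W^{s,A}(\Omega)$, whose seminorm is taken only over $\Omega\times\Omega$, whereas membership of $(u-v)^+$ in $W^{s,A}_0(\Omega)$ demands control of the modular over $\mathbb{R}^n\times\mathbb{R}^n$. Your $(w-\varepsilon)^+$-mollification remark addresses the support and continuity issues, but passing to the limit in the pairing $\langle u,\cdot\rangle$ still requires some tail control on $u,v$ --- in practice $u,v\in\widetilde W^{s,A}(\Omega)$, which is how the result is actually invoked elsewhere in the paper. This is a matter of how the hypotheses are phrased rather than a defect in your reasoning.
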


A \emph{weak maximum principle} is stated in the following proposition.
\begin{prop}  \label{pdm}
Let $A$ be a Young function such that $p>2$. Let $\Omega\subset \mathbb{R}^n$ be open  and bounded. Then, if $u\in W^{s,A}_0(\Omega)$ is a weak supersolution to $(-\Delta_a)^s u=0$ in $\Omega$, $u=0$ in $\mathbb{R}^n\setminus \Omega$ then $u\geq 0$ in $\mathbb{R}^n$.
\end{prop}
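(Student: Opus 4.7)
The approach is the classical one: test the supersolution inequality against $\varphi = u^-$ and exploit the monotonicity (and sign-preservation) of $a$. First I would verify that $u^- := \max(-u, 0)$ is an admissible test function, i.e.\ belongs to $W^{s, A}_0(\Omega)$ and is non-negative. Since $u \equiv 0$ on $\Omega^c$, also $u^- \equiv 0$ on $\Omega^c$; and the pointwise bound $|u^-(x) - u^-(y)| \leq |u(x) - u(y)|$ together with the $\Delta_2$-condition on $A$ and the modular-norm equivalence from Lemma~\ref{comp norm modular} give $[u^-]_{W^{s,A}(\mathbb{R}^n)} \leq [u]_{W^{s,A}(\mathbb{R}^n)} < \infty$ and $\|u^-\|_{L^A} \leq \|u\|_{L^A} < \infty$, so $u^- \in W^{s,A}_0(\Omega)$. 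Plugging $\varphi=u^-$ into the weak supersolution inequality yields
\[
\frac{1}{2}\iint_{\mathbb{R}^n\times\mathbb{R}^n} a\bigl(D^s u(x,y)\bigr)\, D^s u^-(x,y)\, d\nu \;\geq\; 0.
\]

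The key step is to show that the integrand is pointwise non-positive and strictly negative in an unavoidable region. I would do a four-case sign analysis on $(u(x), u(y))$. If $u(x),u(y)\geq 0$ then $D^s u^- = 0$ and the integrand vanishes. If $u(x),u(y)\leq 0$ then $u^-(x)-u^-(y) = -(u(x)-u(y))$, and since $a$ is odd and strictly increasing (by \eqref{potencias} and the strict positivity of $a'$ noted after \eqref{G1}), the integrand equals $-a(D^s u)\,D^s u/|x-y|^s \leq 0$, with equality only when $u(x)=u(y)$. In the mixed case $u(x)\geq 0 \geq u(y)$, one has $u(x)-u(y)\geq 0$ (so $a(D^s u)\geq 0$) while $u^-(x)-u^-(y) = u(y)\leq 0$, and the product is non-positive, strictly so unless $u(x)=u(y)=0$; the case $u(x)\leq 0\leq u(y)$ is symmetric.

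Combining, the integral is simultaneously $\geq 0$ and $\leq 0$, hence equals $0$, and the integrand vanishes almost everywhere. Now suppose for contradiction that the set $N := \{x\in \mathbb{R}^n : u(x) < 0\}\subset \Omega$ has positive Lebesgue measure. Since $\Omega$ is bounded, $\Omega^c$ has positive measure, and $u\equiv 0$ on $\Omega^c$, so the mixed case applies on $N \times \Omega^c$ with strictly negative integrand — contradicting that the integrand is null a.e. Therefore $|N|=0$ and $u\geq 0$ a.e.\ in $\mathbb{R}^n$, which is the claim.

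The main obstacle — and really the only delicate point — is the admissibility of $u^-$ as a test function in the Orlicz-Sobolev setting; this is where the hypothesis $p > 2$ (via the $\Delta_2$ condition for both $A$ and $\widetilde{A}$) is implicitly used, through the modular-norm equivalence that upgrades the pointwise truncation estimate to a genuine bound in $W^{s,A}_0(\Omega)$. The sign analysis itself is algebraic and requires only that $a$ be odd, non-negative on $[0,\infty)$, and strictly increasing.
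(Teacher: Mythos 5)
Your proof is correct and closely parallels the paper's: both test the supersolution inequality against $\varphi=u^-$ and show the resulting integrand is pointwise nonpositive. Where you diverge is in \emph{how} the sign of the integrand is established. You do a direct four-case sign analysis of $a(D^su)\,D^su^-$, which needs only that $a$ is odd, nonnegative on $[0,\infty)$, and strictly increasing. The paper instead rewrites $a(D^su)\,D^su^- = D^su\,D^su^-\int_0^1 a'((1-t)|D^su|)\,dt$ and uses the unified algebraic identity
\[
D^su(x,y)\,D^su^-(x,y) \;=\; -\,\frac{u^+(x)u^-(y)+u^+(y)u^-(x)}{|x-y|^{2s}} \;-\; \bigl(D^su^-(x,y)\bigr)^2 \;\le\; 0,
\]
which packs your four cases into a single line, at the cost of invoking $a'(t)>0$ for $t>0$ to upgrade "integrand nonpositive and integral nonnegative" to "$D^su^-\equiv 0$". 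Your route is slightly more elementary; theirs is more compact.

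One correction to your final paragraph: you attribute the hypothesis $p>2$ to the admissibility of $u^-$. That is a misreading. Admissibility of $u^-$ only needs the pointwise estimate $|u^-(x)-u^-(y)|\le|u(x)-u(y)|$ and the $\Delta_2$ condition, for which $p>1$ suffices. In the paper's proof, $p>2$ enters through \eqref{potencias} to guarantee $a'(t)>0$ for $t>0$, which is the strictness they need in the factor $\int_0^1 a'((1-t)|D^su|)\,dt$. Your own argument, working directly with $a(D^su)$ rather than $a'$, does not actually use $p>2$ at any point -- a small bonus of your approach, though not one the paper exploits. Also, in your mixed case $u(x)\ge 0\ge u(y)$ the integrand vanishes iff $u(y)=0$ (not only when $u(x)=u(y)=0$); this imprecision is harmless for the concluding contradiction, since there you pick $x\in N$ with $u(x)<0$ and $y\in\Omega^c$ with $u(y)=0$, where the strict sign does hold.
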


\begin{proof}
Let $u\in W^{s,A}_0(\Omega)$, $u\neq 0$, be a weak supersolution to $(-\Delta_a)^s u=0$ in $\Omega$. Let $u^\pm:=\max\{\pm u,0\}$ be the positive and negative parts of $u$, respectively. Assume $u^{-}\neq 0$.  Testing with $u^-\in W^{s,A}_0(\Omega)$ we obtain that
\begin{align*}
0&\leq 	\iint_{\mathbb{R}^n\times \mathbb{R}^n} a(|D^s u|)\frac{D^s u}{|D^s u|}D^s u^-\,d\nu=\iint_{\mathbb{R}^n\times \mathbb{R}^n}  D^s u D^s u^- \left(\int_0^1 a'((1-t)|D^s u|)\,dt \right)d\nu.
\end{align*}
We observe that
\begin{align} \label{desss}
\begin{split}
D^s u(x,y)D^su^-(x,y)&= \frac{u^+(x)-u^+(y) -(u^-(x)-u^-(y))}{|x-y|^s}\frac{u^-(x)-u^-(y)}{|x-y|^s}\\
&=
 - \frac{u^+(x) u^-(y) + u^+(y)u^-(x)}{|x-y|^{2s}} - (D^su^-(x,y))^2\leq 0.
 \end{split}
\end{align}
Moreover,  $p>2$ yields that $a'(t)\geq 0$ for all $t\geq 0$ and $a'(t)=0$ if and only if $t=0$. This implies that $D^s u^-\equiv 0$ in $\mathbb{R}^n\times \mathbb{R}^n$, that is, $u^- \equiv 0$, which gives $u\geq 0$ in $\mathbb{R}^n$.
\end{proof}

The following relation between weak and viscosity solutions stated in \cite[Lemma 3.7]{FBPLS}. For further details on the definition of viscosity solution for solutions of $(-\Delta_a)^s$ and the equivalence of solutions, see for instance \cite[Section 3]{FBPLS} and \cite{DDO}.

\begin{prop} \label{equiv}
Assume $p>\frac{1}{1-s}$. Let $u\in W^{s,A}_0(\Omega)$  be a  weak solution to $(-\Delta_a)^s u=0$ in $\Omega$, $u=0$ in $\mathbb{R}^n\setminus \Omega$. Then $u$ is a viscosity solution to the same equation.
\end{prop}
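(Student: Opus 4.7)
The plan is to prove the weak-to-viscosity equivalence by a standard bump-up/contradiction argument, showing that a weak supersolution is a viscosity supersolution (the subsolution half being symmetric, and the equality case following by combining both).

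First, I would verify that the hypothesis $p>1/(1-s)$ is exactly what makes the pointwise value $(-\Delta_a)^s\varphi(x)$ well-defined and continuous for smooth test functions. For $\varphi\in C^2$ one has $|D^s\varphi(x,y)|\lesssim|x-y|^{1-s}$ near the diagonal, so by \eqref{potencias} the integrand in $(-\Delta_a)^s\varphi$ is controlled by $|x-y|^{(p-1)(1-s)-n-s}$, which is integrable in $y$ near $x$ precisely when $(p-1)(1-s)>s$, i.e., when $p>1/(1-s)$. Continuity in $x$ then follows by dominated convergence using local uniform bounds on $D^2\varphi$ and the decay of $\varphi\in L_a(\mathbb{R}^n)$.

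Argue by contradiction: suppose there exist $x_0\in\Omega$ and a $C^2$ test function $\varphi$ (with the decay required to belong to $L_a(\mathbb{R}^n)$) touching $u$ from below at $x_0$ with $(-\Delta_a)^s\varphi(x_0)<0$. Subtracting $\sigma|x-x_0|^2$ for small $\sigma>0$ we may assume the touching is strict without losing the strict sign at $x_0$, and by the continuity established above one has $(-\Delta_a)^s\varphi\leq-\delta$ on a closed ball $\overline{B_r(x_0)}\subset\Omega$ for some $\delta>0$. For $\eta>0$ small define $\varphi_\eta:=\varphi+\eta$ in $B_r(x_0)$ and $\varphi_\eta:=u$ outside. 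By strictness, $U_\eta:=\{\varphi_\eta>u\}$ is non-empty, compactly contained in $B_r(x_0)$, and collapses to $\{x_0\}$ as $\eta\to 0$. The function $\phi:=(\varphi_\eta-u)_+\in W^{s,A}_0(\Omega)$ is non-negative and supported in $\overline{U_\eta}$, so testing the weak inequality for $u$ gives $\langle (-\Delta_a)^su,\phi\rangle\geq 0$.

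Pointwise on $U_\eta$ the function $\varphi_\eta$ agrees with $\varphi+\eta$, so one computes
\[
(-\Delta_a)^s\varphi_\eta(x)=(-\Delta_a)^s\varphi(x)+\text{(truncation error)}\leq-\tfrac{\delta}{2}
\]
for $\eta$ small, where the truncation error, produced by replacing $\varphi+\eta$ by $u$ outside $B_r$, is small on $U_\eta$ because $\varphi+\eta-u$ vanishes on $\partial U_\eta$ and the kernel is integrable under $p>1/(1-s)$. Combined with a monotonicity-of-$a$ identity of the form
\[
\langle(-\Delta_a)^su-(-\Delta_a)^s\varphi_\eta,\phi\rangle=\tfrac12\iint[a(D^su)-a(D^s\varphi_\eta)]D^s\phi\,d\nu\geq 0,
\]
which follows from the monotonicity of $a$ and a four-case sign analysis of $D^s\phi$ on products of $U_\eta$ and $U_\eta^c$ in the spirit of \eqref{desss}, one obtains $0\leq\langle(-\Delta_a)^su,\phi\rangle\leq\langle(-\Delta_a)^s\varphi_\eta,\phi\rangle\leq-\tfrac{\delta}{2}\int\phi\,dx<0$, a contradiction.

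The main obstacle is making the two italicized ingredients above fully rigorous. First, the monotonicity identity for $\phi=(\varphi_\eta-u)_+$ is not pointwise non-negative on $U_\eta\times U_\eta^c$ and requires the careful cancellations of \eqref{desss} adapted to a non-homogeneous $a$. Second, one must quantify the truncation error in $(-\Delta_a)^s\varphi_\eta$ uniformly on $U_\eta$ as $\eta\to 0$; this is where the hypothesis $p>1/(1-s)$ is used essentially, both to control near-diagonal singularities and to ensure $\varphi_\eta\in L_a(\mathbb{R}^n)$ with a bound that passes to the limit.
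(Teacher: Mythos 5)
The paper does not prove Proposition \ref{equiv}: it is cited directly from \cite[Lemma 3.7]{FBPLS} (with further details delegated to \cite{DDO}), so there is no in-paper proof to compare against. Your plan is the standard weak-to-viscosity contradiction argument that those references use, and your identification of why $p>\tfrac{1}{1-s}$ is the right threshold (making $(-\Delta_a)^s\varphi$ pointwise finite and continuous for $C^2$ test functions via the $|z|^{(p-1)(1-s)-n-s}$ bound near the diagonal) is exactly the relevant observation. So in outline your route is the same as the cited proof.

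One sign in your displayed monotonicity inequality is wrong, and it is worth flagging because as written it would break the final chain. With $\phi=(\varphi_\eta-u)_+$, the monotonicity of $a$ gives
\begin{equation*}
\langle(-\Delta_a)^su-(-\Delta_a)^s\varphi_\eta,\phi\rangle=\tfrac12\iint\bigl[a(D^su)-a(D^s\varphi_\eta)\bigr]D^s\phi\,d\nu\ \leq\ 0,
\end{equation*}
\emph{not} $\geq 0$. Indeed on $U_\eta\times U_\eta$ one has $D^s\phi=D^s\varphi_\eta-D^su$, so the integrand equals $-\bigl[a(D^su)-a(D^s\varphi_\eta)\bigr]\bigl[D^su-D^s\varphi_\eta\bigr]\leq 0$ by monotonicity of $a$; on the cross pairs $x\in U_\eta$, $y\in U_\eta^c$ one has $D^s\phi>0$ while $\varphi_\eta(x)-u(x)>0\geq\varphi_\eta(y)-u(y)$ forces $D^s\varphi_\eta\geq D^su$ and hence $a(D^su)-a(D^s\varphi_\eta)\leq 0$; and the integrand vanishes on $U_\eta^c\times U_\eta^c$. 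The $\leq 0$ sign is what your concluding string $0\leq\langle(-\Delta_a)^su,\phi\rangle\leq\langle(-\Delta_a)^s\varphi_\eta,\phi\rangle$ actually requires, so the rest of the argument is consistent once the typo is fixed. The two remaining points you flag — quantifying the gluing/truncation error and the careful case analysis above — are genuine steps that must be filled in, but they are exactly the ones addressed in \cite{FBPLS} and \cite{DDO}, with the hypothesis $p>\tfrac{1}{1-s}$ used in precisely the ways you describe.
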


With the aid of the previous result we prove the following \emph{strong maximum principle for continuous solutions}.
\begin{prop}  \label{pfm}
Let $u\in W^{s,A}_0(\Omega)\cap C(\Omega)$ be a weak supersolution to $(-\Delta_a)^s u=0$ in $\Omega$, $u=0$ in $\mathbb{R}^n\setminus \Omega$. Then, either $u>0$ in $\Omega$ or $u\equiv 0$ in $\mathbb{R}^n$.
\end{prop}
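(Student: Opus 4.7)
The plan is to combine the weak maximum principle (Proposition \ref{pdm}) with the viscosity characterization of weak solutions (Proposition \ref{equiv}) to rule out interior zeros. By Proposition \ref{pdm} we already have $u \geq 0$ in $\mathbb{R}^n$, so the only thing to prove is that $u$ cannot vanish at any interior point unless it vanishes identically.

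Suppose, for contradiction, that $u\not\equiv 0$ and there exists $x_0\in\Omega$ with $u(x_0)=0$. Then $x_0$ is a global minimum of $u$ on $\mathbb{R}^n$. By Proposition \ref{equiv}, $u$ is a viscosity supersolution of $(-\Delta_a)^s u=0$ in $\Omega$. Picking $r>0$ small so that $B_r(x_0)\subset\Omega$, the constant function $\varphi\equiv 0$ satisfies $\varphi(x_0)=u(x_0)=0$ and $\varphi\leq u$ on $B_r(x_0)$, so it is a legitimate test function from below at $x_0$. The associated nonlocal test function is $\varphi_r=0$ on $B_r(x_0)$ and $\varphi_r=u$ outside; the principal value is not needed because $\varphi_r$ is identically $0$ near $x_0$.

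The viscosity supersolution inequality then reads
\[
0\leq (-\Delta_a)^s \varphi_r(x_0)
= \int_{\mathbb{R}^n\setminus B_r(x_0)} a\!\left(-\frac{u(y)}{|x_0-y|^s}\right)\frac{dy}{|x_0-y|^{n+s}}.
\]
Since $A$, and hence $a$, is extended as an odd function to $\mathbb{R}$ (as recorded in the Preliminaries), and $u\geq 0$ throughout $\mathbb{R}^n$, the integrand equals $-a(u(y)/|x_0-y|^s)\leq 0$ pointwise, with strict inequality wherever $u(y)>0$. Thus
\[
\int_{\mathbb{R}^n\setminus B_r(x_0)} a\!\left(\frac{u(y)}{|x_0-y|^s}\right)\frac{dy}{|x_0-y|^{n+s}}\leq 0,
\]
which forces $u(y)=0$ for almost every $y\in\mathbb{R}^n\setminus B_r(x_0)$. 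As $r>0$ can be chosen arbitrarily small and $u$ is continuous in $\Omega$ and vanishes outside $\Omega$, we conclude $u\equiv 0$ in $\mathbb{R}^n$, contradicting the assumption $u\not\equiv 0$. Hence $u>0$ throughout $\Omega$, completing the proof.

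The main subtlety, and the step I would check most carefully, is the legitimacy of the test function $\varphi\equiv 0$ in the viscosity framework for $(-\Delta_a)^s$: this uses the standard truncation where $\varphi_r$ is glued to $u$ outside $B_r(x_0)$, which is well-defined because $u\in L_a(\mathbb{R}^n)$ makes the tail integral convergent. The rest is essentially algebraic, exploiting the oddness of $a$ and its strict positivity on $(0,\infty)$, so the argument is more delicate than the classical PDE case only through this appeal to the nonlocal viscosity machinery of Proposition \ref{equiv}.
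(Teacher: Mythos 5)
Your proof follows the same strategy as the paper: first invoke Proposition~\ref{pdm} to reduce to $u\geq 0$, then use Proposition~\ref{equiv} to pass to the viscosity framework, and finally exploit the oddness of $a$ at an interior zero $x_0$ to show the nonlocal term forces $u\equiv 0$. The paper's proof tests with an arbitrary $C^1_c$ function $\varphi$ with $\varphi(x_0)=0$ and $\varphi<u$ away from $x_0$, computes $-(-\Delta_a)^s\varphi(x_0)$ as an integral with kernel $a(|\varphi|/|x_0-y|^s)\,\varphi/|\varphi|$, and concludes that any nonnegative such $\varphi$ must vanish, hence $u\equiv 0$; you instead test directly with $\varphi\equiv 0$ truncated at radius $r$ and glued to $u$ outside $B_r(x_0)$, which yields the sign conclusion on $u$ itself rather than on a comparison function. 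Your variant is in fact slightly more transparent, since it bypasses the question of whether a nontrivial nonnegative test function strictly below $u$ near $x_0$ even exists, and it makes explicit the truncation/gluing step that the paper leaves implicit. You correctly flag the one genuinely delicate point, namely that $\varphi\equiv 0$ touches $u$ from below non-strictly; this is admissible under standard nonlocal viscosity formulations (or can be handled by subtracting $\varepsilon|x-x_0|^2$ and letting $\varepsilon\to 0$), so the argument is sound.
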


\begin{proof}
Let $u\in W^{s,A}_0(\Omega)$ be a weak supersolution. By Proposition \ref{pdm} we can assume that $u\geq 0$ in $\Omega$. Using Proposition \ref{equiv} we have that $u$ also is a viscosity supersolution to the same equation. Let $x_0\in \Omega$ be a point where $u(x_0)=0$. Since $u$ is continuous, by definition of viscosity supersolution, for any test function $\varphi\in C_c^1(\mathbb{R}^n)$ such that
$$
0=u(x_0)=\varphi(x_0), \qquad \varphi(x)<u(x) \text{ if } x\neq x_0
$$
it holds that
$$
0\geq -(-\Delta_a)^s\varphi(x_0)= \int_{\mathbb{R}^n} a\left( \frac{|\varphi(y)|}{|x_0-y|^s}  \right)\frac{\varphi(y)}{|\varphi(y)|} \frac{dy}{|x_0-y|^{n+s}}.
$$
So, if $\varphi\geq 0$ then it follows  that $\varphi\equiv 0$, from where $u\equiv 0$ in $\mathbb{R}^n$.
\end{proof}

\section{Continuity properties of the fractional $a$-laplacian} \label{sec4}
  
A central aspect of our argument involves computing the fractional $a-$Laplacian of the scaled distance. For fractional $p-$Laplacian, the homogeneity of the operator plays a key role. Specifically, when $u\in C_c^\infty(\mathbb{R}^n)$ and $
c>0$ is a scaling factor, we have the following property:
$$
(-\Delta_p)^s u(x) = c^p (-\Delta_p)^s u(x).
$$
This identity is essential when employing barrier arguments, as it allows for the preservation of homogeneity under scaling transformations.

\noindent However, when attempting to derive an analogous expression for the fractional $
a-$Laplacian, we find out a significant obstacle: we cannot apply conditions \eqref{G1} and \eqref{potencias} to establish  
$$
(-\Delta_a)^s(c u(x)) \leq c^p (-\Delta_a)^s u(x) \quad \text{ or } \quad 
(-\Delta_a)^s(c u(x)) \leq c^q (-\Delta_a)^s u(x).
$$
The difficulty arises because the sign of the difference $u(x)-u(y)$ is not constant; instead, it depends on the specific points $x$ and $y$. This dependency complicates the application of homogeneity arguments and prevents a straightforward extension of the previous result. To address this issue, in the following proposition we study some  continuity properties of the operator.

\begin{prop} \label{prop.cont}
Let $A$ be a Young function satisfying \eqref{G1} with $p>\max\{\frac{1}{1-s},2\}$ and let $\Omega\subset \mathbb{R}^n$ be a bounded domain. Let $u\in Lip(\mathbb{R}^n)\cap L^\infty(\mathbb{R}^n)$. Then $(-\Delta_a )^s u$ is continuous at 0 in the sense that if $\{c_k\}_k \subset \mathbb{R}$ is a sequence such that $\lim_{k\to\infty} c_k = 0$ then
$$
\lim_{k\to\infty} (-\Delta_a)^s (c_k u(x)) =0
$$
uniformly for all $x\in \overline \Omega$.
\end{prop}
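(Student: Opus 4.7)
The plan is to bound $|(-\Delta_a)^s(c_k u)(x)|$ directly, by splitting the defining integral into a near-diagonal piece on $B_1(x)$ and a tail piece on $B_1(x)^c$, and by invoking the two-sided power control \eqref{potencias} on the nonlinearity $a$. The homogeneity obstacle described in Section \ref{sec4} is circumvented simply because we only want the quantity to be \emph{small}, not to be explicitly factored.

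First I would exploit the hypotheses on $u$ to get pointwise control of the nonlocal gradient: on $B_1(x)$ the Lipschitz bound yields $|D^s u(x,y)|\leq L\,|x-y|^{1-s}$ with $L=\mathrm{Lip}(u)$, whereas on $B_1(x)^c$ the $L^\infty$ bound gives $|D^s u(x,y)|\leq 2\|u\|_\infty/|x-y|^s$. Consequently, choosing $k$ so large that $|c_k|\max\{L,2\|u\|_\infty\}\leq 1$, the argument $|c_k D^s u(x,y)|$ stays uniformly below $1$ for every $y\in\mathbb{R}^n$. Since $A$ is extended as an even function and $a$ is odd, the right-hand inequality in \eqref{potencias} then collapses to the simpler estimate $|a(c_k D^s u(x,y))|\leq q\,|c_k|^{p-1}|D^s u(x,y)|^{p-1}$ on all of $\mathbb{R}^n$.

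Next I would estimate the two regions separately. Near the diagonal, inserting the Lipschitz bound produces an integrand controlled by $|x-y|^{(1-s)(p-1)-n-s}$, whose radial integral on $\{|x-y|\leq 1\}$ is finite exactly when $(1-s)(p-1)>s$, that is, when $p>1/(1-s)$, which is precisely the hypothesis imposed in the proposition. On the tail, the $L^\infty$ bound produces an integrand controlled by $|x-y|^{-s(p-1)-n-s}$, whose integral on $\{|x-y|>1\}$ is trivially finite. Adding both contributions yields an estimate of the form $|(-\Delta_a)^s(c_k u)(x)|\leq C|c_k|^{p-1}$ with $C$ depending only on $s$, $n$, $p$, $q$, $\|u\|_\infty$ and $\mathrm{Lip}(u)$, and crucially independent of $x\in\overline\Omega$. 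Because this upper bound is absolutely convergent, the principal value coincides with the ordinary integral, so the same bound controls $(-\Delta_a)^s(c_k u)(x)$, and letting $c_k\to 0$ delivers the claimed uniform convergence on $\overline\Omega$.

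The main obstacle I foresee is keeping the singular integral convergent near $y=x$: this is where the hypothesis $p>1/(1-s)$ is forced, since without it the factor $r^{(1-s)(p-1)-s-1}\,dr$ arising after passing to polar coordinates fails to be integrable at the origin. Once this threshold is respected, the rest is routine bookkeeping with the power bounds from \eqref{potencias}, and the global Lipschitz and $L^\infty$ regularity of $u$ is exactly what makes all the constants uniform in $x\in\overline\Omega$.
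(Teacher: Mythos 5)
Your proof is correct, and it is in fact cleaner and more quantitative than the paper's. Both arguments start from the same near/far decomposition of the integral (over $\{|x-y|\le 1\}$ and $\{|x-y|>1\}$), both use the Lipschitz bound near the diagonal and the $L^\infty$ bound far away, and both isolate the hypothesis $p>1/(1-s)$ as exactly what makes the near-diagonal radial integral converge. The difference is in how uniform convergence is concluded. The paper proves only that the family $f_k(x):=(-\Delta_a)^s(c_k u)(x)$ is uniformly bounded and uniformly equicontinuous on $\overline\Omega$ (with constants depending on $K:=\sup_k|c_k|$ rather than on the individual $c_k$), and then invokes the Arzel\`a--Ascoli theorem; to identify the subsequential uniform limit as $0$ one still needs a separate pointwise-convergence step (via dominated convergence) and a subsequence-of-subsequences argument, which the paper leaves implicit. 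Your proof sidesteps all of this: once $k$ is large enough that $|c_k|\max\{\mathrm{Lip}(u),2\|u\|_\infty\}\le 1$, the nonlocal gradient $|c_k D^s u(x,y)|$ stays below $1$ globally, the upper bound in \eqref{potencias} reduces to the single power $a(t)\le q\,t^{p-1}$, and one obtains the explicit $x$-uniform estimate $|(-\Delta_a)^s(c_k u)(x)|\le C|c_k|^{p-1}$, from which uniform convergence to $0$ follows immediately without compactness. This direct bound also justifies, as you note, that the principal value coincides with an absolutely convergent Lebesgue integral. In short, you buy a self-contained quantitative decay rate $|c_k|^{p-1}$ at essentially no extra cost, while the paper's route requires equicontinuity estimates that your argument never needs.
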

\begin{proof}
 Given $x\in \overline{\Omega}$, for each $k\in \mathbb{N}$ we define the function
$$
f_k(x)= \int_{\mathbb{R}^n} a\left( c_k \frac{u(x)-u(y)}{|x-y|^s} \right)\frac{dy}{|x-y|^{n+s}}.
$$
Changing variables we get the following identity
$$
f_k(x)=\int_{\mathbb{R}^n} a\left( c_k \frac{u(x)-u(x+z)}{|z|^s} \right)\frac{dz}{|z|^{n+s}} 
=\int_{\mathbb{R}^n} a\left( c_k \frac{u(x)-u(x-z)}{|z|^s} \right)\frac{dz}{|z|^{n+s}},
$$
which allow us to write
$$
f_k(x)=\frac12 \int_{\mathbb{R}^n} \left( a(c_k D^s u(x,x+z)) + (c_k D^s u(x,x-z)) \right) \frac{dz}{|z|^{n+s}}.
$$

Along this proof, for $t>0$, we denote $M(t)=\max\{t^{p-2},t^{q-2}\}$. Also, for $u\in Lip(\mathbb{R}^n)$ we denote by $L$ the Lipschitz constant of $u$.

\noindent {\bf  Local  Uniform boundedness:} 
For each $n\in\mathbb{N}$ and $x\in \overline{\Omega}$, we split $f_n(x)$ as
\begin{align*}
f_k(x) &= \frac12 \int_{|z|>1}a\left( c_k D^s u(x,x+z) \right)\frac{dz}{|z|^{n+s}}+\frac12
\int_{|z|<1}a\left( c_k D^s u(x,x+z) \right)\frac{dz}{|z|^{n+s}}\\
&:= I_1(x) + I_2(x).
\end{align*}

\noindent \underline{Bound for $I_1$}. Since $u\in L^\infty(\mathbb{R}^n)$ and $|c_k|\leq K$ for some $K>0$, for any $x\in \overline{\Omega}$,  we get
\begin{align*}
I_1(x)& \leq \frac12 \int_{|z|>1} a(K 2 \|u\|_\infty)\frac{dz}{|z|^{n+s}} = \frac{n \omega_n}{2}a(2K\|u\|_\infty) \int_1^\infty 	r^{-s-1} \,dr= \frac{n \omega_n}{2s}a(2K\|u\|_\infty). 
\end{align*}

\noindent \underline{Bound for $I_2$}.   Observe that using Lemma \ref{desig} and that $u\in Lip(\mathbb{R}^n)$, for any $x\in \overline{\Omega}$ and $z\in\mathbb{R}^n$ such that $|z|<1$ we get 
\begin{align*}
|a&\left( c_k D^s u(x,x+z) \right)| \leq M\left( \frac{K|u(x)-u(x-z)|}{|z|^s}\right) \frac{K|u(x)-u(x+z)|}{|z|^s}\\
&\leq M(K L|z|^{1-s}) K L|z|^{1-s}\\
&\leq C |z|^{(p-1)(1-s)}.
\end{align*}
Therefore, provided $p> 1/(1-s)$, we have that
\begin{align*}
I_2(x)&\leq C \int_{|z|<1}   |z|^{(p-1)(1-s)-n-s}\,dz =  \frac{Cn\omega_n}{2(1-s)}.
\end{align*}

\medskip
 
\noindent {\bf Uniform equicontinuity.} 
 For every $\varepsilon>0$ we want to see that
$|f_n(x_1)-f_n(x_2)|<\varepsilon$  whenever $x_1,x_2\in \overline{\Omega}$ are such that, for $\delta=\delta(\varepsilon)$,  $|x_1-x_2|<\delta$.
 
\noindent For this, we split the difference as 
\begin{align*}
f_k(x_1)-f_k(x_2) = I_1(x_1,x_2) + I_2(x_1,x_2)
\end{align*}
where
\begin{align*}
I_1(x_1,x_2)&:=
 \int_{|x_1-y|>1} a(c_k D^s u(x_1,y))  \frac{dy}{|x_1-y|^{n+s}}
 -
\int_{|x_2-y|>1} a(c_k D^s u(x_2,y))  \frac{dy}{|x_2-y|^{n+s}},
 \\
I_2(x_1,x_2)&:=
\int_{|x_1-y|\leq 1} a(c_k D^s u(x_1,y))  \frac{dy}{|x_1-y|^{n+s}}
 -
\int_{|x_2-y|\leq 1} a(c_k D^s u(x_2,y))  \frac{dy}{|x_2-y|^{n+s}}.
\end{align*}

\noindent  \underline{Bound for $I_1$}.  For any $x_1,x_1\in \mathbb{R}^n$ and $z\in \mathbb{R}^n$ such that $|z|>1$,  using Lemma \ref{desig} we have the following:
\begin{align} \label{cotaI}
\begin{split}
I(&x_1,x_2,z):=a\left(c_k \frac{u(x_1)-u(x_1+z)}{|z|^s}  \right) - a\left(c_k \frac{u(x_2)-u(x_2+z)}{|z|^s} \right) \\
&\leq
M 
\left(\frac{K|u(x_1)-u(x_1+z)|}{|z|^s} + \frac{K|u(x_1)-u(x_1+z)-u(x_2)+u(x_2+z)|}{|z|^s} \right)\\
& \quad \times \frac{K|u(x_1)-u(x_1+z)-u(x_2)+u(x_2+z)|}{|z|^s}.
\end{split}
\end{align}
Since $u\in Lip(\mathbb{R}^n)$ and $|z|>1$, we can bound \eqref{cotaI} as follows: 
\begin{align} \label{estima1}
\begin{split}
I(x_1,x_2,z)&\leq 2KL \dfrac{|x_1-x_2|}{|z|^s} M\left(\frac{6K\|u\|_{\infty}}{|z|^s} \right) \leq 
C \dfrac{|x_1-x_2|}{|z|^{s(q-1)}}, 
\end{split}
\end{align}
where $C$ is a positive constant depending only of $K$, $L$, $\|u\|_\infty$, $p$ and $q$.

Therefore, we get that
\begin{align*}
|I_1(x_1,x_2)| &= \left|
 \int_{|z|>1} \left( a\left(c_k \frac{u(x_1)-u(x_1+z)}{|z|^s}  \right) - a\left(c_k \frac{u(x_2)-u(x_2+z)}{|z|^s} \right)  \right) \frac{dz}{|z|^{n+s}} 
\right|\\
&\leq
C|x_1-x_2|\int_{|z|>1} \frac{dz}{|z|^{n+sq}} =  \frac{n\omega_nC}{sq} |x_1-x_2|.
\end{align*}

\noindent  \underline{Bound for $I_2$}. For any $x_1,x_2\in \overline{\Omega}$ and $z\in\mathbb{R}^n$ such that $|z|\leq 1$, we can bound \eqref{cotaI} as
\begin{align} \label{estima4}
\begin{split}
I(x_1,x_2,z)&\leq M(3K L|z|^{1-s}) 2K L |x_1-x_2| |z|^{1-s}\\ 
&\leq 
C |z|^{(1-s)(p-1)}|x_1-x_2|,
\end{split}
\end{align}
where $C>0$ depends only of $p$, $q$, $K$ and  $L$.

Therefore, by changing variables and using \eqref{estima4}, provided again $p>\frac{1}{1-s}$, we get that
\begin{align*}
|I_2(x_1,x_2)| &= \left|
 \int_{|z|\leq 1} \left( a\left(c_k \frac{u(x_1)-u(x_1+z)}{|z|^s}  \right) - a\left(c_k \frac{u(x_2)-u(x_2+z)}{|z|^s} \right)  \right) \frac{dz}{|z|^{n+s}} 
\right|\\
&\leq C|x_1-x_2|\int_{|z|\leq 1} |z|^{(p-1)(1-s)-n-s}\,dz\\
&=C|x_1-x_2|.
\end{align*}

\medskip

Then, by Arzelá-Ascoli theorem, we get that, up to a subsequence,  $f_k$ converges uniformly to $0$ in $\overline{\Omega}$. This concludes the proof.
\end{proof}

As a straightforward consequence, we get the next corollary.

\begin{cor} \label{coro1}
Assume that $A$ is  a Young function satisfying \eqref{G1} with $p>\max\{\frac{1}{1-s},2\}$. Let $u\in Lip(\mathbb{R}^n)\cap L^\infty(\mathbb{R}^n)$ and let $\Omega\subset \mathbb{R}^n$ be a bounded domain. For any $\varepsilon >0$,  there exists $C>0$  such that for all $c\in (0, C)$ it holds that
$$
|(-\Delta_a)^s(c u(x))|\leq \varepsilon,
$$for all $x\in \overline{\Omega}$. 
\end{cor}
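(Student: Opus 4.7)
The plan is to obtain this corollary as a purely logical $\varepsilon$--$C$ reformulation of the uniform sequential convergence delivered by Proposition \ref{prop.cont}. Nothing new needs to be computed; all the analytic work (the local uniform boundedness and the uniform equicontinuity yielding, via Arzel\`a--Ascoli, uniform convergence to $0$) was already carried out in the proof of that proposition.

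Concretely, I would introduce the auxiliary function
\[
F(c) := \sup_{x \in \overline{\Omega}} \bigl| (-\Delta_a)^s(c u(x)) \bigr|, \qquad c \in \mathbb{R},
\]
which is finite for small $|c|$ by the uniform-in-$x$ bounds on $I_1(x)$ and $I_2(x)$ proved in Proposition \ref{prop.cont}, and argue by contradiction. Assume the statement of the corollary fails: then there exist $\varepsilon_0 > 0$ and, for each $k \in \mathbb{N}$, some $c_k \in (0,1/k)$ together with a point $x_k \in \overline{\Omega}$ such that $|(-\Delta_a)^s(c_k u(x_k))| > \varepsilon_0$. In particular, $c_k \to 0$, so Proposition \ref{prop.cont} applies to the sequence $\{c_k\}_k$ and yields $(-\Delta_a)^s(c_k u(x)) \to 0$ uniformly for $x \in \overline{\Omega}$; equivalently, $F(c_k) \to 0$. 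This directly contradicts $F(c_k) \geq \varepsilon_0$ for every $k$, so the corollary holds.

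The main obstacle is essentially nonexistent at this stage, since the heavy analytic content lies entirely in Proposition \ref{prop.cont}. The only fine point worth verifying is the equivalence between uniform convergence on $\overline{\Omega}$ and the vanishing of the supremum $F(c_k)$, which is the standard identity $\|f_k\|_{L^\infty(\overline{\Omega})} \to 0 \iff f_k \to 0$ uniformly on $\overline{\Omega}$; the hypothesis $u \in \operatorname{Lip}(\mathbb{R}^n) \cap L^\infty(\mathbb{R}^n)$ together with $p > \max\{1/(1-s),2\}$ ensures $F(c)$ is finite for $|c|$ small, so the equivalence is applicable without further argument.
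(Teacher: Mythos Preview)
Your proposal is correct and is precisely the ``straightforward consequence'' the paper has in mind: the authors give no separate argument for Corollary \ref{coro1}, simply recording it as an immediate $\varepsilon$--$C$ reformulation of the uniform sequential convergence established in Proposition \ref{prop.cont}. Your contradiction argument via a sequence $c_k\to 0$ is the standard way to pass from the sequential statement to the uniform one, and nothing more is needed.
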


Finally, we state the following key relation between weakly and pointwisely  bounded $a$-Lapalcian.

\begin{prop}\label{p a w}
Let $\Omega\subset \mathbb{R}^n$ be a bounded open domain. Assume that for $\varepsilon>0$,
$$(-\Delta_a)u(x)\leq \varepsilon,$$
pointwisely for any $x\in \Omega$. Then, $(-\Delta_a)u\leq \varepsilon$ in the weak sense in $\Omega$. 
\end{prop}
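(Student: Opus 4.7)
The statement reduces to showing that, for every non-negative $\varphi \in W^{s,A}_0(\Omega)$,
$$\langle (-\Delta_a)^s u, \varphi \rangle = \int_\Omega \varphi(x)\, (-\Delta_a)^s u(x)\, dx,$$
since multiplying the pointwise bound $(-\Delta_a)^s u(x) \le \varepsilon$ by $\varphi(x) \ge 0$ and integrating over $\Omega$ then produces the weak inequality at once. My plan is therefore to prove this identity by symmetrising a truncated version of the principal-value integral and then letting the truncation vanish.

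Concretely, fix $\delta>0$ and consider the truncated operator
$$I_\delta(x):=\int_{\{|x-y|>\delta\}} a(D^s u(x,y))\,\frac{dy}{|x-y|^{n+s}},$$
so that $I_\delta(x)\to (-\Delta_a)^s u(x)$ by definition of the principal value. On the truncated set the integrand has no diagonal singularity, which makes it legitimate to multiply by $\varphi(x)$, extend the $x$-integration to $\mathbb{R}^n$ (using $\varphi\equiv 0$ outside $\Omega$), and invoke Fubini's theorem. The next step is to exchange $x\leftrightarrow y$ in the resulting double integral: since $a$ is odd, the kernel $|x-y|^{-n-s}$ is symmetric, and $D^s u(y,x)=-D^s u(x,y)$, averaging the two representations yields
$$\int_\Omega \varphi(x)\, I_\delta(x)\,dx \;=\; \frac{1}{2}\iint_{\{|x-y|>\delta\}} a(D^s u)\, D^s\varphi \,\frac{dx\,dy}{|x-y|^n}.$$

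It remains to pass to the limit $\delta\to 0$ on both sides. On the right, the integrand $a(D^s u)\, D^s\varphi$ is absolutely integrable with respect to $d\nu$: by Young's inequality \eqref{2.5} together with the $\Delta_2$ estimate $\widetilde A(a(t))\le C\, A(t)$, it is dominated by a constant multiple of $A(|D^s u|)+A(|D^s\varphi|)$, both of which lie in $L^1(d\nu)$ since $u,\varphi \in W^{s,A}$; dominated convergence therefore delivers the limit $\langle(-\Delta_a)^s u,\varphi\rangle$ as defined in \eqref{rep.form}. On the left, $I_\delta(x)\to(-\Delta_a)^s u(x)$ pointwise, so, once a dominating function is in place, the limit is $\int_\Omega \varphi\,(-\Delta_a)^s u\,dx \le \varepsilon\int_\Omega \varphi\,dx$, which is the desired weak inequality. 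The main obstacle is precisely producing a $\delta$-uniform $L^1$ dominating function for $\varphi(x)\,I_\delta(x)$, since the bound $|I_\delta|\le \varepsilon$ is only known in the limit; this will be handled by splitting $I_\delta$ into its near- and far-diagonal contributions and applying the estimates on $|a(D^s u(x,y))|$ already developed in the proof of Proposition \ref{prop.cont}, exploiting the regularity of $u$ and the compact support of $\varphi$ inside $\Omega$.
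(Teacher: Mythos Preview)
Your proposal is correct and follows essentially the same route as the paper: both arguments reduce the claim to the identity $\langle(-\Delta_a)^s u,\varphi\rangle=\int_\Omega \varphi(x)\,(-\Delta_a)^s u(x)\,dx$, obtained by splitting $D^s\varphi$ into its $\varphi(x)$ and $\varphi(y)$ parts and using that $a$ is odd. The paper carries out this split formally in one line (and restricts to $\varphi\in C_c^\infty(\Omega)$), whereas you insert the truncation $|x-y|>\delta$ and pass to the limit via dominated convergence, which is the more careful justification; your proposed domination via the near/far splitting and the bounds from Proposition~\ref{prop.cont} indeed works under the implicit regularity $u\in Lip(\mathbb{R}^n)\cap L^\infty(\mathbb{R}^n)$ (the only case in which the proposition is ever invoked, cf.\ the remark closing Section~\ref{sec4}).
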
 
\begin{proof}
Let $\varphi\in C_c^{\infty}(\Omega)$ such that $\varphi\geq 0$. Then, according to the representation formula \eqref{rep.form} we derive the following:
\begin{equation}
\begin{split}
\left\langle (-\Delta_a)u, \varphi\right\rangle & = \dfrac{1}{2}\iint_{\mathbb{R}^n \times \mathbb{R}^n} a\left(\dfrac{u(x)-u(y)}{|x-y|^s} \right)\dfrac{\varphi(x)-\varphi(y)}{|x-y|^s}\dfrac{dx\,dy}{|x-y|^{n}}\\& = \dfrac{1}{2}\int_{\mathbb{R}^n}\varphi(x)\left[ \int_{\mathbb{R}^n} a\left(\dfrac{u(x)-u(y)}{|x-y|^s} \right)\dfrac{dy}{|x-y|^{s+n}}\right]\,dx\\& \quad -\dfrac{1}{2}\int_{\mathbb{R}^n}\varphi(y)\left[- \int_{\mathbb{R}^n} a\left(\dfrac{u(y)-u(x)}{|y-x|^s} \right)\dfrac{dx}{|y-x|^{s+n}}\right]\,dy\\& = \dfrac{1}{2}\int_{\mathbb{R}^n}\varphi(x)(-\Delta_a)^s u(x) \,dx+\dfrac{1}{2}\int_{\mathbb{R}^n}\varphi(y)(-\Delta_a)^s u(y) \,dy \\&
\leq \varepsilon \int_{\mathbb{R}^n}\varphi(x)\,dx.
\end{split}
\end{equation}

This ends the proof of the proposition. 
\end{proof}

\begin{rem}Given an open bounded set $\Omega$, we point out that all of the results of this section apply to the distance function $d_\Omega$ since $d_\Omega\in Lip(\mathbb{R}^n)\cap L^{\infty}(\mathbb{R}^{n})$.
\end{rem}

\section{Hopf's lemma for torsion-like problems}\label{sec5}

Let $A$ be a Young function satisfying \eqref{G1} $p>\max\{\frac{1}{1-s},2\}$. For  $R\in (0,1)$, denote by $u_R \in W^{s,A}_0(B_R)\cap C(\overline B_R)$ the unique solution to
\begin{align*}  
\begin{cases}
(-\Delta_{a})^s u_R=\beta & \text{ in } B_R\\
u_R=0 & \text{ in } B_R^c
\end{cases}
\end{align*}
where $\beta>0$ is a fixed constant. Then we have the following proposition.

\begin{prop} \label{pre.hopf}
There exists $C_1,c_2>0$ such that for all $R\in(0,1)$ and $x\in \mathbb{R}^n$,
$$
C_1d_{B_R}(x) \le u_R(x) \leq C_2 d^s_{B_R}(x).
$$

\end{prop}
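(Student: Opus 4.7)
I would prove the two inequalities separately, handling the upper bound by a scaling that reduces everything to the fixed ball $B_1$, and handling the lower bound by constructing the Lipschitz subsolution barrier $C_1 d_{B_R}$ through Corollary~\ref{coro1} followed by the comparison principle of Proposition~\ref{compara}.

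\emph{Upper bound.} Introduce the rescaled function $v_R(y):=R^{-s}u_R(Ry)$, which is supported in $\overline{B_1}$. A direct change of variables in the definition of the fractional $a$-Laplacian gives the identity $(-\Delta_a)^s u_R(x)=R^{-s}(-\Delta_a)^s v_R(x/R)$, so $v_R$ weakly solves $(-\Delta_a)^s v_R=R^s\beta$ in $B_1$ with zero exterior data. Since $R^s\beta\leq \beta$ for every $R\in(0,1)$, Proposition~\ref{prop2} applied on the fixed domain $B_1$ yields a constant $C_2$ depending only on $s,n,A$ and $\beta$ (in particular, not on $R$) such that $v_R(y)\leq C_2 d_{B_1}^s(y)$. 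Undoing the scaling gives
$$
u_R(x)=R^s v_R(x/R)\leq C_2(R-|x|)^s=C_2 d_{B_R}^s(x),
$$
as desired.

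\emph{Lower bound.} The key observation is that $d_{B_R}$ is $1$-Lipschitz with $\|d_{B_R}\|_{L^\infty(\mathbb{R}^n)}\leq R\leq 1$ uniformly in $R\in(0,1)$. An inspection of the proof of Proposition~\ref{prop.cont} shows that all its bounds on $f_k$ (both the local uniform boundedness and the uniform equicontinuity) depend on the input function only through its Lipschitz constant and its sup-norm. Applying Corollary~\ref{coro1} with $\varepsilon=\beta$ on the fixed bounded domain $B_1$ to $u=d_{B_R}$ therefore produces a positive constant $C_1$, independent of $R$, such that
$$
|(-\Delta_a)^s (C_1 d_{B_R})(x)|\leq \beta \qquad \text{for all } x\in\overline{B_1}.
$$
By Proposition~\ref{p a w}, this pointwise bound implies the weak inequality $(-\Delta_a)^s(C_1 d_{B_R})\leq \beta=(-\Delta_a)^s u_R$ in $B_R$. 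Since $C_1 d_{B_R}$ and $u_R$ agree (and vanish) outside $B_R$, are continuous on $\overline{B_R}$ and belong to $W^{s,A}(B_R)$, the comparison principle (Proposition~\ref{compara}) yields $C_1 d_{B_R}(x)\leq u_R(x)$ in $B_R$, which is the desired lower bound.

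\emph{Main obstacle.} The crucial point is controlling the constants uniformly in $R\in(0,1)$. For the upper bound this is achieved through the scaling to the fixed ball $B_1$, where the rescaled right-hand side $R^s\beta$ remains bounded by $\beta$. For the lower bound, one must track the proof of Proposition~\ref{prop.cont} to confirm that the constants produced there depend on the input function only through its Lipschitz constant and its $L^\infty$ norm, both uniformly bounded by $1$ for the family $\{d_{B_R}\}_{R\in(0,1)}$; this is precisely the structure of the estimates in that proposition, so the uniformity comes for free once observed.
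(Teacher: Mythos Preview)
Your proof is correct, but it follows a somewhat different route from the paper's, especially for the lower bound.

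For the upper bound both arguments scale to $B_1$, but you rescale the function values by $R^{-s}$ so that $v_R$ solves $(-\Delta_a)^s v_R = R^s\beta$ for the \emph{original} operator, whereas the paper keeps the values and works with the scaled operator $(-\Delta_{a_R})^s$ (using $a_R(t)=a(t/R^s)$ and \cite[Lemma~C.1]{FBSV}). These are equivalent, and your version is arguably cleaner since Proposition~\ref{prop2} applies directly with $K=\beta$ on the fixed ball.

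For the lower bound the approaches genuinely differ. The paper applies Corollary~\ref{coro1} \emph{once}, to the fixed function $d_{B_1}$, obtaining $\lambda_0$ with $|(-\Delta_a)^s(\lambda d_{B_1})|\le\beta$; it then exploits the identity $(-\Delta_{a_R})^s(\lambda R^s d_{B_1}) = (-\Delta_a)^s(\lambda d_{B_1})$ to compare $\lambda R^s d_{B_1}$ with the solution $u_1$ of $(-\Delta_{a_R})^s u_1=\beta$ on $B_1$, and only at the end uses $R<R^s$ to pass from $R^s d_{B_1}$ to $R d_{B_1}=d_{B_R}(R\cdot)$. Your approach instead applies Corollary~\ref{coro1} to the whole family $\{d_{B_R}\}_{R\in(0,1)}$ and must extract a uniform threshold $C_1$; this is legitimate because, as you note, the estimates in the proof of Proposition~\ref{prop.cont} depend on the input only through its Lipschitz constant and sup-norm, both bounded by $1$ here. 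What you gain is that you never introduce the auxiliary operator $a_R$ or the scaling lemma; what the paper gains is that no inspection of the proof of Proposition~\ref{prop.cont} is needed, since the corollary is invoked for a single fixed function.
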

\begin{proof}

Fix $0<R<1$.

\noindent {\bf Step 1}.
Corollary \ref{coro1} gives the existence of  $\lambda_0\in (0,1)$ such that 
\begin{equation*}
|(-\Delta_{a})^s( \lambda d_{B_1})|  \leq \beta \quad \text{ in } B_1,
\end{equation*}
for any $\lambda\in (0, \lambda_0]$. Let us fix $\lambda \in (0, \lambda_0)$.  Moreover,  by Proposition \ref{p a w}, we have that
\begin{equation}\label{ineq d}
(-\Delta_{a})^s( \lambda d_{B_1})\leq \beta,
\end{equation}
in the weak sense in $ B_1$.

We denote by $u_1\in W^{s,A}_0(B_1)$ the unique weak solution to
\begin{align} \label{eqR} 
\begin{cases}
(-\Delta_{a_R})^s u_1=\beta & \text{ in } B_1\\
u_1=0 & \text{ in } B_1^c,
\end{cases}
\end{align}where we recall that $a_R$ is given by \eqref{a r}. From Proposition \ref{pfm}, it follows that $u_1>0$ in $B_1$. We define the function  $w(x)$ as 
$$w(x)=\lambda R^sd_{B_1}(x).$$

\noindent Then, in the weak sense, and by \eqref{ineq d}, we have that
\begin{align*}
(-\Delta_{a_R})^s w  &= (-\Delta_{a_R})^s (\lambda R^s d_{B_1})\\
&=
(-\Delta_a) (\lambda d_{B_1})\leq \beta \leq (-\Delta_{a_R})^s u_1\text{ in } B_1.
\end{align*}
Moreover, $u_1=d_{B_1} =0$ in $B_1^c$. Hence, by the comparison principle given  in Proposition \ref{compara},
\begin{equation*}
w   = \lambda R^s d_{B_1}  \leq u_1 \quad \text{ in } \mathbb{R}^n.
\end{equation*}

On the other hand, by using Proposition \ref{prop2}, we have that
\begin{equation*}  
u_1 \leq C d_{B_1}^s R^s \quad \text{ in } B_1,
\end{equation*}
and then, for some $C_1,C_2>0$
\begin{equation} \label{desig1}
C_1R^s d_{B_1} \le u_1 \leq C_2 R^s d_{B_1}^s \quad \text{ in } \mathbb{R}^n.
\end{equation}
Since $R<1$, we get $R<R^s$ and so
\begin{equation} \label{desig11}
C_1R d_{B_1} \le u_1 \leq C_2 R^s d_{B_1}^s \quad \text{ in } \mathbb{R}^n.
\end{equation}

\medskip
\noindent {\bf Step 2}. Denote by $u_R\in W^{s,A}_0(B_R)$ the unique weak solution to
\begin{align*}  
\begin{cases}
(-\Delta_{a})^s u_R=\beta & \text{ in } B_R\\
u_R=0 & \text{ in } B_R^c.
\end{cases}
\end{align*}
by \cite[Lemma C.1]{FBSV}, if we define $v(x)=u_R(Rx)$, this scaled function solves
\begin{align} \label{eqR.29} 
\begin{cases}
(-\Delta_{a_R})^s v=\beta & \text{ in } B_1\\
v=0 & \text{ in } B_1^c.
\end{cases}
\end{align}
By uniqueness of solution of \eqref{eqR.29} and \eqref{eqR} we get $v=u_1$. Hence, from \eqref{desig1} we obtain
$$
C_1R  d_{B_1}(x) \le u_R(Rx) \leq C_2 R^s d_{B_1}^s(x), \qquad \text{ for any } x\in \mathbb{R}^n.
$$
Since $d_{B_R}(Rx)=Rd_{B_1}(x)$, 
$$
C_1 d_{B_R}(Rx) \le u_R(Rx) \leq C_2 d_{B_R}^s(Rx), \qquad \text{ for any } x\in \mathbb{R}^n.
$$
This ends the proof. 
 \end{proof}
 
We are now in position to prove Theorem \ref{teo2.intro}.

\begin{proof}[Proof of Theorem \ref{teo2.intro}]
Fix $x\in \Omega_\rho$ for $\rho\in(0,\tfrac12)$. By the interior sphere property there is a ball $B\subset \Omega$ of radius $2\rho<1$, tangent to $\partial\Omega$ such that $d_\Omega(x)=d_B(x)$. Given $0 <\beta\leq \varepsilon$, let $v\in W^{s,A}_0(B)$ solution to 
\begin{align} \label{eqR.2} 
\begin{cases}
(-\Delta_{a})^s v=\beta & \text{ in } B\\
v=0 & \text{ in } B^c.
\end{cases}
\end{align}
So, in $B$ we have that $(-\Delta_a)^s v \le (-\Delta_a)^s u$ in $B$, and $v\leq u$ in $B^c$. By comparison principle, $v\leq u$ in $\mathbb{R}^n$. Since $d_\Omega(x)=d_B(x)$, from Proposition \ref{pre.hopf} we get that
\begin{equation} \label{eqh1}
C_1d_\Omega(x) \le v(x) \leq u(x)
\end{equation}
which concludes the proof.
\end{proof}

\section{Hopf's boundary lemma with constant-sign potentials}\label{sec6}
 
In this section, we  prove a Hopf's lemma for weak solutions of the fractional $a$-laplacian with a constant-sign potential.
  
\begin{proof}[Proof of Theorem \ref{teo1.intro}]

Firstly, observe that if $x_0\in \partial \Omega$ and $u(x_0)>0$, then there is a neighbourhood of $x_0$ in $\overline{\Omega}$ where $u\geq c >0$ for some $c$. Then, the conclusion 
$$
\liminf_{B_R\ni x\to x_0}\frac{u(x)}{\delta_R(x)}>0
$$holds trivially. Thus, we will assume that $u=0$ on $\partial \Omega$. 

For a given $x_0\in \partial \Omega$, by the regularity of $\Omega$, 
there exists $x_1\in \Omega$ on the normal line to $\partial \Omega$ at $x_0$ and $r_0> 0$ such that
$$
B_{r_0}(x_1)\subset \Omega,\quad  \overline{B}_{r_0}(x_1)\cap \partial \Omega= \{x_0\} \quad \text{and }\quad \text{dist} (x_1,\Omega^{c})=|x_1-x_0|.
$$
We will assume without loss of generality that $x_0= 0$, $r_0=1$ and $x_1= e_n$, with $e_n=(0, \ldots, 0, 1)\in \mathbb{R}^{n}$ and consider a nontrivial weak supersolution $u$  to \eqref{eq11}.
 
\medskip
 
Let $d$ be the distance function
 $d\colon \mathbb R^n\to \mathbb R$ given by $d(x)=\dist (x,B_1^c(e_n))$. We will build now a suitable weak supersolution.
 
 From Corollary \ref{coro1}, given $\beta>0$ (to be determined later) there exists $\lambda\in(0,1)$ small enough  such that for all $\lambda_0\in (0,\lambda]$
$$
|(-\Delta_a)^s(\lambda_0 d(x))| \leq \beta, \quad \text{for all }x\in B_1(e_n)\cap B_r(0),
$$
and by Proposition \ref{p a w}, 
$$
(-\Delta_a)^s(\lambda_0 d) \leq \beta \quad \text{ weakly in } B_1(e_n)\cap B_r(0).
$$
 Let $D\subset\subset B_1^c(e_n)\cap \Omega$ be a smooth domain. Define 
$$
\underline u(x)= \lambda d(x)+ \chi_D(x)  u(x).
$$
 
\noindent By \cite[Lemma C.5]{FBSV}, we get
$$
(-\Delta_a)^s \underline u \leq \beta + h \quad \text{weakly in }B_1(e_n)\cap B_r(0),
$$
where the function $h$ is given by
\begin{equation}
h(x)= 2\int_D \left[a\left(\dfrac{\lambda d(x)-\lambda d(y)-\chi_D(y)u(y)}{|x-y|^s} \right)-a\left(\dfrac{\lambda d(x)-\lambda d(y)}{|x-y|^s} \right)\right] \dfrac{dy}{|x-y|^{s+n}}.
\end{equation}
Applying Lemma \ref{lema1} and using the fact that $a$ is odd leads to 
$$
h(x)\leq 2c \int_D a\left(\dfrac{-u(y)}{|x-y|^s} \right) \dfrac{dy}{|x-y|^{s+n}},
$$
where $c>0$ is a constant depending only of $p$ and $q$.

Then, since $D\subset\subset B_1^c(e_n)\cap \Omega$, there is $C_D>1$ such that $|x-y|\leq C_D$ for any $x\in B_1(e_n)$ and $y\in D$. Using that $a$ is increasing and odd we get
\begin{align*}
h(x)&\leq  -2c \int_D a\left(  \frac{u(y)}{|x-y|^s}\right) \frac{dy}{|x-y|^{s+n}} \leq 
-2c \int_D a\left(  \frac{u(y)}{C_D^s}\right) \frac{dy}{C_D^{s+n}}\\ 
&\leq 
-a(M_0)\bar{C}_D|D|:= -\widetilde{M}_0,
\end{align*}
where $\bar C_D:= 2cC_D^{-(n+s(q+1))}$ and  $M_0:=\min_{x\in D} u(x)>0$.
 
 Now, define 
$$
M_1:=\inf_{x\in B_1(e_n)\cap B_r^c(0)} u(x)>0, \qquad 
M_2:=\sup_{x\in B_1(e_n)\cap B_r(0)} u(x)>0, 
$$
and we take $r$ small enough  so that $r \in (0, r_0)$, and
$$
a(M_2) < \dfrac{\widetilde M_0}{\|c\|_{\infty}}.
$$
This can be done since $u$ is continuous in $\overline{\Omega}$ and $u = 0$ on $\partial\Omega$. Observe that this bound is uniform and independent of $r$, although  $r$ depends on the boundary point.
Next, choose 
 
$$
0 < \beta\leq  \widetilde M_0-a(M_2) \|c\|_\infty 
$$
which leads to  (in the weak sense)
\begin{align} \label{eqcomp}
\begin{split}
(-\Delta_a)^s \underline u &\leq \beta   -\widetilde{M}_0 \leq - a(M_2) \|c\|_\infty\\
&\leq c a(u)\leq (-\Delta_a)^s u \qquad \text{ in } B_1(e_n)\cap B_r(0).
\end{split}
\end{align}
Then,  for $x\in B_1^c(e_n)$ we have that
$$
\underline{u}(x) = u(x) \chi_D(x) \leq u(x) 
$$
and for $x\in B_1(e_n)\setminus B_r(0)$ we have, taking $\lambda$ small enough, that
$$
\underline{u}(x) = \lambda d(x) \leq \lambda \leq u(x).
$$
In sum, we have obtained from the previous expression and  \eqref{eqcomp} that
\begin{align*}
\begin{cases}
(-\Delta_a)^s \underline u  \leq (-\Delta_a)^s u &\quad \text{ weakly in } B_1(e_n)\cap B_r(0),\\
\underline u(x)\leq u(x) &\quad \text{ in } (B_1(e_n)\cap B_r(0))^c.
\end{cases}
\end{align*}
Using the comparison principle it  gives that
\begin{equation} \label{desuu}
\underline u(x)\leq u(x) \quad \text{ in } B_1(e_n)\cap B_r(0).
\end{equation}
By definition of $d(x)$, for any $t\in (0,1)$
\begin{equation} \label{Des1}
d(te_n)=\delta(te_n)
\end{equation}
where $\delta(x)=\dist(x,\Omega^c)$, and since $te_n \notin D$
\begin{equation} \label{Des2}
\underline u(te_n) = \beta  d(te_n),
\end{equation}
this gives, from \eqref{Des1}, \eqref{Des2} and \eqref{desuu}, that
$$
\frac{u(te_n)}{\delta(te_n)}=\frac{u(te_n)}{d(te_n)}=\frac{u(te_n)}{\beta^{-1}\underline u(te_n)} \geq \beta>0
$$
which completes the proof.
\end{proof}

\section{Hopf's boundary lemma for sign-changing potentials}\label{sec7}

In this section, we will provide a Hopf's boundary lemma for solutions of
$$
(-\Delta_a)^{s}u \geq c(x)a(u)  \quad \text{ in }\Omega$$where now no sign condition is impose to $c$. We will mainly follow the plan of \cite{DSV} and its nonlinear counterpart \cite[Theorem 3.1]{OS}, with the necessary changes due to the lack of homogeneity of the operator. Hence, we will provide all the details.  We also deduce some interesting consequences.

Following \cite{DSV}, we consider the next class of functions. Given  a point $x_0\in \partial \Omega$ where the interior ball condition holds, the class $\mathcal{Z}_{x_0}$ consists of functions $u: \mathbb{R}^{n}\to \mathbb{R}$ such that $u$ is continuous in $\mathbb{R}^{n}$, $|u|>0$  in $B_r(x_r)$ for all sufficiently small $r$, $u(x_0)=0$ and the following growth condition is true
\begin{equation}\label{growth boundary u}
\limsup_{r\to 0}\Phi(r)=+\infty
\end{equation}where
\begin{equation}\label{defi phi}
\Phi(r):= \dfrac{(\inf_{B_{r/2}(x_r)}|u|)^{p-1}}{r^{p s}},
\end{equation}
where we recall that $p$ satisfies
$$p\leq \dfrac{ta(t)}{A(t)}, \quad \text{for all }t>0.$$

\medskip
We state now the main theorem of this section.
We are now in position to prove Theorem \ref{teo3.intro}

\begin{proof}[Proof of Theorem \ref{teo3.intro}]
For the sake of simplicity we split the proof in four stages.

\textbf{Step 1}: Consider the distance function for the unit ball
$$d(x)=\text{dist}(x, B_1^c)$$and recall, for further reference, that any distance function for bounded domains satisfies Proposition \ref{prop.cont} and Proposition \ref{p a w}. Also,  $d$ satisfies  the lower bound
\begin{equation}\label{lower bound v}
d(x) \geq \dfrac{1}{2}(1-|x|^{2}).
\end{equation}
We will use this estimate in what follows.

\medskip

\textbf{Step 2}: In order to localize the argument near the boundary point $x_0$, we will build appropriate functions to compare with $u$.

Take $\rho\in (0, 1/2)$ and  $\eta \in C_0^{\infty}(B_{1-2\rho})$  nonnegative, $\eta \leq 1$, with $\int_{\mathbb{R}^{n}}\eta =1$. Then, for $x \in B_1\setminus \overline{B_{1-\rho}}$, we have
\begin{equation}\label{g lap eta}
\begin{split}
(-\Delta_a)^{s}\eta(x)& = 
\text{p.v.}\, \int_{\mathbb{R}^{n}}a\left(\dfrac{\eta(x)-\eta(y)}{|x-y|^{s}} \right)\dfrac{dy}{|x-y|^{n+s}}= -\text{p.v.}\,\int_{\mathbb{R}^{n}}a\left(\dfrac{\eta(z+x)}{|z|^{s}} \right)\dfrac{dz}{|z|^{n+s}}\\ & = 
-\text{p.v.}\, \int_{B_{1-2\rho}(-x)}a\left(\dfrac{\eta(z+x)}{|z|^{s}} \right)\dfrac{dz}{|z|^{n+s}}.
\end{split}
\end{equation}
Since $z \in B_{1-2\rho}(-x)$, there holds
$$
|z| \leq |z+x|+|x|\leq 2-2\rho,
$$
from where, \eqref{g lap eta} gives that for any $x\in B_1\setminus \overline{B_{1-\rho}}$
\begin{align}\label{est eta 100}
\begin{split}
(-\Delta_g)^{s}\eta(x) 
&\leq -\text{p.v.}\, \int_{B_{1-2\rho}(-x)}a\left(\dfrac{\eta(z+x)}{(2(1-\rho))^s}  \right)\frac{dz}{(2(1-\rho))^{n+s}}\\
&\leq -\left(\dfrac{1}{2(1-\rho)} \right)^{sp+n} \int_{B_{1-2\rho}} a(\eta(n(z)))\,dz = -C\left(\dfrac{1}{2(1-\rho)} \right)^{sp+n}. 
\end{split}
\end{align}

Consider the  balls $B_{r}(x_r)$ and points $x_r$ from the interior ball condition for $\Omega$ at $x_0$. For each $r > 0$ small enough, let for any  $x \in B_r(x_r)$
$$
\alpha_r :=\dfrac{1}{C_r}\inf_{B_{r(1-\rho)}(x_r)}u,$$
and
\begin{equation}\label{defi psi}
\psi_r(x)=\dfrac{\alpha_r}{2}  d\left(\dfrac{x-x_r}{r} \right)+ \dfrac{C_r\alpha_r}{2} \eta\left(\dfrac{x-x_r}{r} \right),\end{equation}
where the constant $C_r>0$ is chosen later. Moreover, the function
$$\eta\left(\dfrac{x-x_r}{r} \right)$$has support in $B_{r(1-2\rho)}(x_r)$. Therefore,  for $x \in B_r(x_r)\setminus \overline{B_{r(1-\rho)}(x_r)}$, we obtain in the weak sense
\begin{equation}\label{choice C 3}
(-\Delta_a)^s \psi_r(x)= (-\Delta_a)^s\left( \dfrac{\alpha_r}{2}  d\left(\dfrac{\cdot-x_r}{r}\right)\right)(x)+h(x),
\end{equation}and letting $y'=(y-x_r)/r$, $x'=(x-x_r)/r$, we deduce
\begin{equation}\label{choice C 2}
\begin{split}
h(x)&\leq \dfrac{2c_1}{r^s}\int_{B_{1-2\rho}}a\left(\dfrac{C_r\alpha_r}{2r^s}\dfrac{\eta(x')-\eta(y')}{|x'-y'|^s} \right)\dfrac{dy'}{|x'-y'|^{s+n}}\\& =  \dfrac{2c_1}{r^s}\int_{B_{1-2\rho}}a\left(\dfrac{C_r\alpha_r}{2r^s}\dfrac{-\eta(y')}{|x'-y'|^s} \right)\dfrac{dy'}{|x'-y'|^{s+n}}\\& \leq -2c_1 \Phi(r)\int_{B_{1-2\rho}}a\left(\dfrac{\eta(y')}{|x'-y'|^s} \right)\dfrac{dy'}{|x'-y'|^{s+n}}= -c\Phi(r),
\end{split}
\end{equation}where
$$c=2c_1 \int_{B_{1-2\rho}}a\left(\dfrac{\eta(y')}{|x'-y'|^s} \right)\dfrac{dy'}{|x'-y'|^{s+n}}>0,$$and $\Phi$ is given by \eqref{defi phi}. Similarly, we obtain that
$$(-\Delta_a)^s\left(\dfrac{\alpha_r}{2}  d\left(\dfrac{\cdot-x_r}{r}\right)\right)= \dfrac{1}{r^s}(-\Delta_a)^s\left(\dfrac{\alpha_r}{2r^s}d\right).$$

Now, we choose the constant $C_r>1$ as follows. By Propositions \ref{prop.cont} and \ref{p a w}, there is $\lambda_0>0$ such that for all $\lambda<\lambda_0$ there holds
\begin{equation}\label{choice C}
|(-\Delta_a)^s\left(\lambda d \right)|< \dfrac{c}{2}\left(\dfrac{\inf_{B_{r/2}(x_r)}|u|}{r^s} \right)^{p-1}.
\end{equation}
Choose $C_r>1$ large enough so that
$$\dfrac{1}{C_r}\dfrac{\inf_{B_{r/2}(x_r)}|u|}{r^s}=\dfrac{\alpha_r}{r^s}<\lambda_0.$$

Therefore, recalling the definition of $\psi_r$ \eqref{defi psi} and combining \eqref{choice C 3}, \eqref{choice C 2} and \eqref{choice C} the function $\psi_r$ verifies:
\begin{align} \label{step3'}
\begin{split}
\begin{cases}
(-\Delta_a)^{s}\psi_r \leq  - \dfrac{c}{2}\Phi(r) & \text{weakly in } B_r(x_r)\setminus \overline{B_{r(1-\rho)}(x_r)},\\
\psi_r \leq \alpha_r C_r &\text{ in }B_r(x_r), \\
\psi_r= 0 &\text{ in }\mathbb{R}^{n}\setminus B_r(x_r),\\ 
\psi_r\geq  \dfrac{\alpha_r}{4r^{2}}(r^{2}-|x-x_r|^{2}) &\text{ in } B_r(x_r).
\end{cases}
\end{split}
\end{align}

\textbf{Step 3}: The function $w:= \psi_r-u^{-}$ satisfies that $u \geq w$ a.e. in $\mathbb R^n$.

\medskip

To prove this assertion we will use comparison. Observe first that $w \leq u$ in $\mathbb{R}^{n}\setminus B_r(x_r)$, moreover, in $B_{r(1-\rho)}(x_r)$ we have that
$$
w = \psi_r \leq \alpha_rC_r \leq \inf_{B_{r(1-\rho)}(x_r)}u \leq u.
$$
Also,  by \eqref{step3'},
$$(-\Delta_a)^{s} w \leq - \dfrac{c}{2}\Phi(r) + h,$$weakly in $B_r(x_r)\setminus \overline{B_{r(1-\rho)}(x_r)}$, where the function $h$ is given by
\begin{equation*}
\begin{split}
h(x)& = 2 \int_{supp\,\,u^{-}}\left[a\left(\dfrac{\psi_r(x)-\psi_r(y)+u^{-}(y)}{|x-y|^{s}}\right)-a\left(\dfrac{\psi_r(x)-\psi_r(y)}{|x-y|^{s}} \right) \right]\dfrac{dy}{|x-y|^{n+s}}\\ & =  2 \int_{supp\,\,u^{-}}\left[a\left(\dfrac{\psi_r(x)+u^{-}(y)}{|x-y|^{s}}\right)-g_p\left(\dfrac{\psi_r(x)}{|x-y|^{s}} \right) \right]\dfrac{dy}{|x-y|^{n+s}}
 \leq C^{*}
\end{split}
\end{equation*}since $\psi_r$ and $u^{-}$ are bounded and $|x-y|\geq \delta >0$, for some $\delta$. Hence, weakly in $B_r(x_r)\setminus \overline{B_{r(1-\rho)}(x_r)}$, we obtain
$$
(-\Delta_a)^{s} w \leq  C^{*} - \dfrac{c}{2}\Phi(r)   \leq 
-\|c^{-}a(u^{+})\|_{L^{\infty}(\mathbb{R}^{n})} \leq ca(u) \leq  (-\Delta_a)^{s}u, \text{ for r small enough,}
$$
where we have used \eqref{growth boundary u}. By the comparison principle, it follows $u \geq w$ in $\mathbb{R}^{n}$.

\medskip
\textbf{Step 3}: Final argument: to complete the proof, we argue as in \cite{DSV}.  For $\beta \in (0, \tfrac{\pi}{2})$, let
\begin{equation}
\mathcal{C}_\beta:=\left\lbrace x\in \Omega: \dfrac{x-x_0}{|x-x_0|}\cdot \eta > c_\beta \right\rbrace,
\end{equation}where $\eta$ is the inward normal vector joining $x_0$ with the center of the interior ball, and define the constant $c_\beta := \cos\left(\frac{\pi}{2}-\beta \right) > 0$. Take any sequence  $x_k\in \mathcal{C}_\beta$ such that $x_k \to x_0$. Then,
\begin{equation*}
\begin{split}
|x_k-x_r|^{2}& = |x_k -x_0-r\eta|^{2}= |x_k-x_0|^{2}+r^{2} -2r(x_k-x_0)\cdot \eta \\ & \leq r^{2}-|x_k-x_0|(2c_\beta r - |x_k-x_0|) < r^{2},
\end{split}
\end{equation*}for $k$ large enough. Moreover, since
$$|x_k-x_r| \geq |x_r-x_0|+|x_k-x_0| = r-|x_k-x_0| > r(1-\rho)$$for $k$ large, then $x_k \in B_r(x_r)\setminus \overline{B_{r(1-\rho)}(x_r)}$. Next,
\begin{equation*}
\begin{split}
u(x_k)& \geq w(x_k)= \psi_r(x_k) \geq \frac{\alpha_r}{r^{2}4}(r^{2}-|x_k-x_r|^{2})_{+}\\
& = \frac{\alpha_r}{4r^{2}}( r^{2}-|x_k-x_0-r\eta|^{2})_{+}\\ & = \frac{\alpha_r}{4r^{2}}(2(x_k-x_0)\cdot \eta -|x_k-x_0|^{2})_{+}\geq  \frac{\alpha_r}{4r^{2}}(2c_\beta r|x_k-x_0|-|x_k-x_0|^{2})_{+}.
\end{split}
\end{equation*}Therefore,
$$
\liminf_{k \to \infty}\dfrac{u(x_k)-u(x_0)}{|x_k-x_0|}\geq  \frac{\alpha_r}{4r^{2}}\liminf_{k \to \infty}(2c_\beta r-|x_k-x_0|)_{+} = \dfrac{\alpha_rc_\beta}{2r^{2}}.
$$
This ends the proof of the theorem. 
\end{proof}

\begin{rem}\label{remark boundary}
Observe that  although the equation is nonlocal, in the above proof we only used that $u$ is a supersolution near the boundary of $\Omega$. 
\end{rem}

By the interior sphere condition and \cite{HKS}, for any $x_0 \in \partial \Omega$, there exist $x_1\in \Omega$ and $\rho > 0$ such that
$$B_{\rho}(x_1)\subset \Omega, \quad d(x)=|x-x_0|,$$for all $x\in \Omega$ of the form
$$x= tx_1+(1-t)x_0, \quad t \in [0, 1].$$Thus,  there is a sequence $x_n \in \Omega$ such that
$$\delta(x_n)= \text{dist}\,(x_n, \partial \Omega)= |x_n-x_0| \to 0 \text{ as }n \to \infty.$$

Hence, if $u/\delta$ can be extended to a continuous function to a neighbourhood of $\partial \Omega$, Theorem \ref{teo3.intro} implies that
$$\dfrac{u(x_0)}{\delta(x_0)}
= \lim_{\overline\Omega \ni x\to x_0}\dfrac{u(x)-u(x_0)}{\delta(x)}
= \lim_{n\to \infty}\dfrac{u(x_n)-u(x_0)}{\delta(x_n)}
= \lim_{n\to \infty}\dfrac{u(x_n)-u(x_0)}{|x_n-x|}>0.$$

We establish another direct consequence. Since the function $a$ is odd, by changing $u$ by $-u$ in Theorem \ref{teo3.intro}, we obtain the following corollary.

\begin{cor}\label{corolario normal frac}
Let $\Omega \subset \mathbb{R}^n$ be open and bounded and $x_0 \in \partial \Omega$. Assume that $\Omega$ satisfies the interior ball condition at $x_0$.
Let $u$ such that $-u \in \mathcal{Z}_{x_0}$,  $u^{+}\in L^{\infty}(\mathbb{R}^{n})$, and
$$
(-\Delta_a)^{s}u \leq c(x)a(u)  \quad \text{weakly in }\Omega
$$
where $c \in L^{1}_{loc}(\Omega)$ with $c^{-}\in L^{\infty}(\Omega)$.
Further, suppose that there is $R>0$ such that $u \leq 0$ in $B_R(x_0)$, $u < 0 $ in $B_R(x_0)\cap \Omega$. Then, for every $\beta \in (0, \pi/2)$, the following strict inequality holds
\begin{equation}\label{normal derivative at boundary}
\limsup_{x\in \Omega, x\to x_0}\dfrac{u(x)-u(x_0)}{|x-x_0|} < 0,
\end{equation}whenever the angle between $x-x_0$ and the vector joining $x_0$ and the center of the interior ball is smaller than $\pi/2-\beta$. 
\end{cor}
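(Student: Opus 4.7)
The plan is to adapt the barrier-plus-comparison strategy of \cite{OS,DSV}, with the crucial modification that scaling the barrier is controlled by the continuity Proposition \ref{prop.cont} rather than by homogeneity. After normalizing so that $x_0=0$ and that the interior ball at $x_0$ is of the form $B_r(x_r)$, I would build, for each small $r>0$, a family of barrier functions supported in $B_r(x_r)$ of the shape
\begin{equation*}
\psi_r(x) := \tfrac{\alpha_r}{2}\,d\!\left(\tfrac{x-x_r}{r}\right)+\tfrac{C_r\alpha_r}{2}\,\eta\!\left(\tfrac{x-x_r}{r}\right),
\end{equation*}
where $d(\cdot)=\dist(\cdot,B_1^c)$, $\eta\in C_c^\infty(B_{1-2\rho})$ is a nonnegative bump with unit mass, $\alpha_r:=C_r^{-1}\inf_{B_{r(1-\rho)}(x_r)}u$, and $C_r>1$ is large. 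The point of splitting $\psi_r$ into two pieces is that the bump term will drive the main negative contribution to $(-\Delta_a)^s\psi_r$ on the annulus $B_r(x_r)\setminus\overline{B_{r(1-\rho)}(x_r)}$, while the scaled distance produces only a lower-order error that the continuity of the operator at $0$ lets us make arbitrarily small.

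For Step 2, I would evaluate $(-\Delta_a)^s\psi_r$ pointwise on the annulus. On that region $\eta((x-x_r)/r)=0$, so writing the principal-value integral explicitly and using that $\eta\ge 0$ together with monotonicity of $a$, the bump term yields an upper bound of order $-c\,\Phi(r)$ with $c>0$ (after rescaling $y'=(y-x_r)/r$ and invoking the lower bound $p\le ta(t)/A(t)$ to pull out the factor $(\inf|u|/r^s)^{p-1}$). To neutralize the distance contribution I would choose $C_r$ large enough that $\alpha_r/r^s<\lambda_0$, where $\lambda_0$ is the constant from Corollary \ref{coro1}; this guarantees $|(-\Delta_a)^s(\tfrac{\alpha_r}{2}d)|\le\tfrac{c}{2}\Phi(r)$. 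Passing from pointwise to weak bounds via Proposition \ref{p a w} gives
\begin{equation*}
(-\Delta_a)^s\psi_r \leq -\tfrac{c}{2}\Phi(r)\qquad\text{weakly in }B_r(x_r)\setminus\overline{B_{r(1-\rho)}(x_r)}.
\end{equation*}

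For Steps 3 and 4, I would set $w:=\psi_r-u^-$ and argue by comparison. Outside the annulus the inequality $w\le u$ is immediate: in $B_{r(1-\rho)}(x_r)$ one has $\psi_r\le\alpha_r C_r\le\inf u$, and outside $B_r(x_r)$ one has $\psi_r=0$ so $w=-u^-\le u$. On the annulus, the contribution of $u^-$ to $(-\Delta_a)^s w$ is a nonlocal cross-term $h(x)$ that is uniformly bounded by some $C^\ast$, because $u^-\in L^\infty$, $|x-y|$ is bounded below on the support of $u^-$ relative to the annulus, and Lemma \ref{desig} gives integrable estimates. Then, using the growth condition $\limsup_r\Phi(r)=+\infty$ from the class $\mathcal Z_{x_0}$, one selects $r$ small enough along a suitable subsequence so that $C^\ast-\tfrac{c}{2}\Phi(r)\le -\|c^- a(u^+)\|_\infty$, which via $u\ge 0$ near $x_0$ is dominated by $c(x)a(u)\le(-\Delta_a)^s u$. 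Proposition \ref{compara} then yields $u\ge w$ on $\mathbb{R}^n$. Finally, for a sequence $x_k\in\mathcal C_\beta$ with $x_k\to x_0$, the elementary geometric inequality $|x_k-x_r|^2\le r^2-|x_k-x_0|(2c_\beta r-|x_k-x_0|)$ together with the bound $d\ge\tfrac12(1-|\cdot|^2)$ gives $\psi_r(x_k)\ge\tfrac{\alpha_r c_\beta}{2r}|x_k-x_0|$ for large $k$, producing the required strict positivity of the lower limit.

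The main obstacle is exactly the non-homogeneity of $(-\Delta_a)^s$: unlike the $p$-Laplacian case of \cite{OS}, we cannot rescale the amplitude of $d$ and track the result through a clean power law, so we cannot directly beat the distance-function contribution. The only available tool is the continuity at $0$ provided by Proposition \ref{prop.cont} and Corollary \ref{coro1}, which forces a delicate tuning of the parameters $C_r$ and $\alpha_r$: $C_r$ must be large enough to drive $\alpha_r/r^s$ below $\lambda_0$ yet small enough that the inequality $\psi_r\le\alpha_r C_r$ still matches the infimum of $u$ near the center. The growth hypothesis defining $\mathcal Z_{x_0}$, via the divergence of $\Phi(r)$ along a subsequence, is what ultimately makes all these balancing choices compatible and allows the bump contribution $-\tfrac{c}{2}\Phi(r)$ to absorb both $C^\ast$ and $\|c^- a(u^+)\|_\infty$ at once.
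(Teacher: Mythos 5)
Your proposal does not prove Corollary~\ref{corolario normal frac}: it is, almost verbatim, the proof of Theorem~\ref{teo3.intro}, and in fact your final sentence ("producing the required strict positivity of the lower limit") is the conclusion of that theorem, not of the corollary, which asserts \emph{strict negativity of the upper limit} $\limsup (u(x)-u(x_0))/|x-x_0| < 0$.

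The signs in your construction do not match the hypotheses here. In the corollary $u\leq 0$ in $B_R(x_0)$ and $u<0$ in $B_R(x_0)\cap\Omega$, so $\alpha_r=C_r^{-1}\inf_{B_{r(1-\rho)}(x_r)}u<0$ and $\psi_r$ is nonpositive. Your claim "in $B_{r(1-\rho)}(x_r)$ one has $\psi_r\le\alpha_r C_r\le\inf u$" then fails: on the part of that ball where $\eta$ vanishes and $d$ is small, $\psi_r$ is close to $0$, which is strictly greater than $\alpha_r C_r<0$. Likewise, the comparison step quoting "$u\ge 0$ near $x_0$" contradicts the corollary's hypothesis that $u\le 0$ near $x_0$, and the inequality is the reversed one, $(-\Delta_a)^s u\le c(x)a(u)$. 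So the barrier argument, as written, does not go through.

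The paper's proof is a one-liner and you should use it. Since $a$ is odd, $(-\Delta_a)^s(-u)=-(-\Delta_a)^s u$ and $a(-u)=-a(u)$, so the weak inequality $(-\Delta_a)^s u\le c(x)a(u)$ is equivalent to $(-\Delta_a)^s(-u)\ge c(x)a(-u)$. The remaining hypotheses translate directly: $-u\in\mathcal{Z}_{x_0}$ is assumed, $(-u)^-=u^+\in L^\infty(\mathbb{R}^n)$, $-u\geq 0$ in $B_R(x_0)$ and $-u>0$ in $B_R(x_0)\cap\Omega$. Thus $-u$ satisfies all hypotheses of Theorem~\ref{teo3.intro}, whence $\liminf_{\Omega\ni x\to x_0}\frac{-u(x)-(-u(x_0))}{|x-x_0|}>0$ in the stated cone, which is exactly the desired strict inequality $\limsup_{\Omega\ni x\to x_0}\frac{u(x)-u(x_0)}{|x-x_0|}<0$. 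If you wanted to re-prove the corollary from scratch, you would need to run the barrier argument with everything sign-flipped (a nonpositive barrier, a supersolution comparison from above, etc.), which is morally the same as simply substituting $-u$ into the theorem.
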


Reasoning as before, we may also conclude, under the assumptions of Corollary \ref{corolario normal frac} and the hypothesis that $u/\delta \in C(\overline{\Omega})$, that
$$\dfrac{u(x_0)}{\delta(x_0)} < 0.$$

\section*{Acknowledgments}
P. O.  was partially supported by CONICET PIP 11220210100238CO and
ANPCyT PICT 2019-03837. P. Ochoa and A. Salort are members of CONICET.

\section*{Statements and Declarations}
\textbf{Data Availability} The author declares that the manuscript has no associated data.

\textbf{Competing interests:} The authors have no competing interests to declare that are relevant to the content of this article.

\end{document}